\newcommand\QQ{\hbox{I\kern-.53em\hbox{Q}}}
\newcommand{\PP}{\hbox{\bf P}}
\newcommand{\F}{\hbox{\bf F}}
\theoremstyle{plain}
\newtheorem{theorem}{Theorem}[section]
\newtheorem{corollary}[theorem]{Corollary}
\newtheorem{proposition}[theorem]{Proposition}
\newtheorem{observation}[theorem]{Observation}
\theoremstyle{plain}
\newtheorem{definition}[theorem]{Definition}
\newtheorem{conjecture}[theorem]{Conjecture}
\newtheorem{question}[theorem]{Question}
\theoremstyle{plain}
\title{Homogeneous edge-disjoint $K_{2s}$ and $T_{st,t}$ unions}
\author{Italo J. Dejter\\
\small University of Puerto Rico\\[-0.8ex]
\small Rio Piedras, PR 00936-8377\\[-0.8ex]
\small Puerto Rico\\[-0.8ex]
\small\tt italo.dejter@gmail.com\\
}
\date{\dateline{July 19, 2020}{XX}\\
\small Mathematics Subject Classification: 05B25, 05C62, 05C75, 05E20}
\begin{document}

\maketitle

\begin{abstract}
\noindent Let $r>2$ and $\sigma\in(0,r-1)$ be integers. We require $t<2s$, where $t=2^{\sigma+1}-1$ and $s=2^{r-\sigma-1}$. Generalizing a known $\{K_4,T_{6,3}\}$-ultrahomogenous graph $G_3^1$, we find that a finite, connected, undirected, arc-transitive graph $G_r^\sigma$ exists each of whose edges is shared by just two maximal subgraphs, namely a clique $X_0=K_{2s}$ and a $t$-partite regular-Tur\'an graph $X_1=T_{st,t}$  on $s$ vertices per part. Each copy $Y$ of $X_i$ ($i=0,1$) in $G_r^\sigma$ shares each edge with just one copy of $X_{1-i}$ and all such copies of $X_{1-i}$ are pairwise distinct. Moreover, $G_r^\sigma$ is an edge-disjoint union of copies of $X_i$, for $i=0,1$. We prove that $G_r^\sigma$ is $\{K_{2s},T_{st,t}\}$-homogeneous if $t<2s$, and just $\{T_{st,t}\}$-homogeneous otherwise, meaning that there is an automorphism of $G_r^\sigma$ between any two such copies of $X_i$ relating two preselected arcs. 
\bigskip

\noindent \textbf{Keywords:} homogeneous graph; ultrahomogeneous graph; arc-transitive graph
\end{abstract}

\section{Introduction}\label{s1}

In applications of combinatorics to networks and telecommunications, it is desirable to have connected-graph models with the most homogeneity available on collections of vertices forming edge-disjoint cliques of fixed order, (e.g.,~the 42 $K_4$'s in the 12-regular $\{K_4,T_{6,3}\}$-ultrahomogeneous graph $G$ of \cite{Dej2}), and also forming edge-disjoint complements of forbidden small cliques in larger cliques of fixed order, namely edge-disjoint regular Tur\'an graphs, (e.g.,~the 21 $K_{2,2,2}$'s in $G$), each edge as the intersection of a clique and a regular Tur\'an graph, a desirable feature given as two-way interpretation of the models.

Our aim is to explore a natural generalization of the construction in \cite{Dej2}
 to the construction of a  family of graphs $G_r^\sigma$, ($r,\sigma\in\mathbb{Z}$, $r>2$; $\sigma\in(0,r-1)$), based, as in \cite{Dej2}, on binary projective geometry (for example, the Fano plane in \cite{Dej2}). Only the cited graph $G$ in \cite{Dej2} is $\mathcal C$-ultrahomogeneous (or $\mathcal C$-UH), among the claimed $G_r^\sigma$, ($\mathcal C$, any graph family). It was found that the strongest homogeneity conditions that the graphs $G_r^\sigma$ satisfy are those in Definition~\ref{Def}, below. First, some known graph homogeneity notions are recalled.

According to Sheehan \cite{Sh},
a  finite, undirected, simple graph $\Gamma$ is said to be {\it homogeneous} (respectively,
{\it ultrahomogeneous}) if, whenever two induced subgraphs $Y_0$ and $Y_1$ of $\Gamma$ are isomorphic, then some isomorphism (respectively, every isomorphism) of $Y_0$ onto $Y_1$ extends to an automorphism of $\Gamma$.
Gardiner \cite{Gard}, Gol'fand and Klin \cite{GK,Por}, and Reichard \cite{Sven} gave explicit characterizations of ultrahomogeneous graphs.
Isaksen et al. \cite{Is} defined a graph $\Gamma$ to be ${\mathcal C}$-{\it UH}, where ${\mathcal C}$ is a class of graphs,
if every isomorphism between any two induced subgraphs of $\Gamma$ in ${\mathcal C}$ extends to an automorphism of $\Gamma$.

\begin{definition}\label{Def} {\rm Let ${\mathcal C}$ be a set of arc-transitive graphs.
We say that a finite, undirected, simple graph $G$ is $\mathcal
C$-{\it homogeneous} (or ${\mathcal C}$-{\it H}) if, for any two isomorphic induced subgraphs
$Y_0,Y_1\in{\mathcal C}$ of $G$ and any two arcs $(v_i,w_i)$ of $Y_i$ ($i=0,1$), there exists an automorphism $f$ of $G$ such that
$f(Y_0)=Y_1$, $f(v_0)=v_1$ and $f(w_0)=w_1$.}
\end{definition}

Every ${\mathcal C}$-UH graph is $\mathcal C$-H.
Since $\mathcal C$ in Definition~\ref{Def} is formed by arc-transitive graphs, all arcs in a member of ${\mathcal C}$  behave similarly. 
If $\mathcal C$ consists of two non-isomorphic arc-transitive graphs $X_0$ and $X_1$, 
then a $\mathcal C$-H graph is said to be ${\{X_0,X_1\}}$-{\it homogeneous} (or ${\{X_0,X_1\}}$-{\it H}).

Let $s=2^{r-\sigma-1}$ and $t=2^{\sigma+1}-1$.
The graph  $G$ of \cite{Dej2} can be expressed as $G=G_r^\sigma$, with $(r,\sigma)=(3,1)$ and seen as a  $\{K_{2s},T_{st,t}\}$-H graph 
([3] showed it is $\{K_4,T_{6,2}\}$-UH), where $T_{st,t}$ is the maximal induced $t$-partite regular-Tur\'an subgraph on $s$ vertices per part. We conjecture the following, (see Subsection~\ref{s11}).

\begin{conjecture}\label{072120} The claimed generalization $G_r^\sigma$ of $G_3^1$ is isomorphic to a graph $G$ that is $\{K_{2s},T_{st,t}\}$-$H$ if $t<2s$ and just $\{T_{st,t}\}$-H otherwise.
\end{conjecture}

Ronse \cite{Ronse} showed that a graph is homogeneous if and only if it is ultrahomogeneous. On the other hand,
${\mathcal C}$-H graphs, which are the case in Theorems~\ref{t3.5} and~\ref{t6.1} below, are not necessarily ${\mathcal C}$-UH.
The above claimed generalization of the construction of $G_3^1$ to that of the graphs $G_r^\sigma$ is given below from Section~\ref{s3} on, guaranteeing an answer to the following question.

\begin{question}\label{1.2} For integers $r>3$, $\sigma\in(0,r-1)$, $s=2^{r-\sigma-1}$ and $t=2^{\sigma+1}-1$, does there exist a connected $\{K_{2s},T_{st,t}\}$-H graph $G_r^\sigma$ that is not
$\{K_{2s},T_{st,t}\}$-UH?\end{question}

\subsection{Cliques and Regular-Tur\'an Subgraphs}\label{s11}

\noindent In \cite{Is}, $\mathcal C$-UH graphs for the following four
classes ${\mathcal C}$ of subgraphs were considered:


\noindent{\bf(A)} the complete graphs;

\noindent{\bf(B)} their complements, that is the empty graphs;

\noindent{\bf(C)} the disjoint unions of complete graphs;

\noindent {\bf(D)} their complements, that is the complete multipartite graphs.


\noindent Since $K_{2s}\in${\bf(A)} and $T_{st,t}\in${(\bf D)}, then both $K_{2s}$ and $T_{st,t}$ belong to ${\mathcal D}=${\bf(A)}$\cup${\bf(D)}. In fact, each $G_r^\sigma$ will coincide with a connected graph $G$ that Conjecture~\ref{072120} claims is ${\mathcal D}$-H if $t<2s$ and {\bf(D)}-H otherwise. 

\begin{conjecture}\label{072220} The graph $G$ of {\rm Conjecture~\ref{072120}} can be set in a unique way both as an edge-disjoint union of a family $J_0$ of cliques $X_0=K_{2s}$ and as an edge-disjoint union of a family $J_1$ of maximal induced regular-Tur\'an subgraphs $X_1=T_{st,t}$ and with:

\noindent{\bf(i)} ${\mathcal D}$ (respectively, {\bf(D)}) as the smallest class of graphs containing
both $X_0$ and $X_1$, if $t<2s$ (respectively, $X_1$, otherwise);

\noindent{\bf(ii)} all copies of $X_1$ in $G$ present in $J_1$ and every copy of $X_0$ in $G$ either contained in a

copy of $X_1$ or present in $J_0$;

\noindent{\bf(iii)} no two copies of $X_i$ in $G$ sharing more than one
vertex, for $i\in\{0,1\}$;

\noindent{\bf(iv)} each edge $\xi$ in $G$ shared by exactly
one copy $H_0$ of $X_0$ and one copy $H_1$ of $X_1$\,, so

that $\xi$ is the only edge in $H_0\cap H_1$.
\end{conjecture}

\noindent A graph $G$ as in these items {\bf(i)}-{\bf(iv)}
is arc-transitive with the same number, say $m_i(G)=m_i(G,v)$, of copies of $X_i$ incident to each vertex $v$ of $G$, independently of $v$, for $i\in\{0,1\}$. 

If $t<2s$, then $G$ is claimed (Conjecture~\ref{072120}) to be $\mathcal D$-H, where
${\mathcal D}=\{X_0,X_1\}$, since no copy of $X_0$ is contained in a copy of $X_1$ in $G$, and we say $G$ is ${\mathcal D}_{K_2}$-{\it homogeneous} (or ${\mathcal D}_{K_2}$-{\it H}).
If ${\mathcal D}=\{X_0,X_1\}$, then a ${\mathcal D}_{K_2}$-H graph is said to be ${\mathcal D}_{\ell_0,\ell_1}^{m_0,m_1}$-{\it H}, or $\{X_0\}_{\ell_0}^{m_0}\{X_1\}_{\ell_1}^{m_1}$-{\it H}, where $\ell_i$ is the number of copies of $X_i$ in $G$ and $m_i=m_i(G)$, for $i\in\{0,1\}$.

From \cite{Dej2}, we know that the line graph of the $n$-cube is a ${\mathcal D}_{\ell_0,\ell_1}^{m_0,m_1}$-UH graph with ${\mathcal D}=\{K_n,K_{2,2}\}$,
$\ell_0=2^n$, $\ell_1=2^{n-3}n(n-1)$, $m_0=2$ and $m_1=n-1$,
for $3\le n\in\mathbb{Z}$.
As in \cite{Dej2}, we say that $G$ is {\it line-graphical} if
min$(m_0,m_1)=2=m_i$ and $X_i$ is complete for just one of $i=0,1$.
In \cite{Dej2}, it was shown that $G_3^1$ is non-line-graphical $\{K_4,T_{6,2}\}_{K_2}$-UH.
Our construction yields each of $G_3^1$ and $G_4^1$ as 
$\{K_4,T_{6,3}\}_{42,21}^{4,3}$-H \cite{Dej2} and $\{K_8,T_{12,3}\}_{2520,1470}^{8,7}$-H, of
vertex order 42 and 2520 and regular degree 12 and 56, respectively.
If $t\ge 2s$, then $X_1=T_{st,t}$ contains $K_{2s}$, so $G_r^\sigma$ cannot be $\mathcal D$-H. In this case, we still say that $G_r^\sigma$ is a $\{X_0\not\subset X_1,X_1\}$-{\it H graph}, meaning that $G_r^\sigma$ is $X_1$-H with an automorphism $f$ of $G_r^\sigma$ between any two copies of $X_0$ not contained in any copy of $X_1$ and so that $f$ relates two preselected arcs. Here, $m_0$ and $\ell_0$ are taken only for copies of $K_{2s}\not\subset T_{st,t}$. For example, $G_4^2$ is a $\{K_4\not\subset T_{14,7},T_{14,7}\}_{630,45}^{12,3}$-H graph of vertex order 210 and degree 36.

\begin{question}\label{1.3} Does there exist a connected non-line-graphical graph $G_r^\sigma$ answering {\rm Question} $\ref{1.2}$ that is either ${\mathcal D}_{\ell_0,\ell_1}^{m_0,m_1}$-H or $\{X_0\not\subset X_1,X_1\}_{\ell_0,\ell_1}^{m_0,m_1}$-H, with min$\{m_0,m_1\}>2$?\end{question}

Graphs $G_r^\sigma$ answering Question~\ref{1.3} affirmatively are constructed in Section~\ref{s3}, with their claimed properties established in Theorem~\ref{t3.5} and more specifically in Theorem~\ref{t6.1}.

Theorem~\ref{t3.5} yields the existence of such graphs $G_r^\sigma$. Theorem~\ref{t6.1} establishes order, diameter and other parameters of such graphs for $r\le 8$ (Proposition~\ref{3.1}) and $\rho=r-\sigma\le 5$ (Proposition~\ref{4.6}), a total of 18 initial cases. 
Furthering the concept of $\mathcal C$-UH graph in \cite{Is}, a graph $G$ is said  to be $\{K_{2s}\not\subset T_{st,t}\}$-{\it UH} if every isomorphism between cliques $K_{2s}$ not contained in any induced copy of $T_{st,t}$ extends to an automorphism of $G$. Surpassing Theorem~\ref{t6.1}, if $t\ge 2s$ and $r-\sigma=2$, then it can be shown that $G_r^\sigma$ is $\{K_4\not\subset T_{4t,t}\}$-UH.

\section{Binary Projective Geometry}\label{s2}

In order to prove the results claimed above, we need some notions of binary projective geometry.
An {\it incidence structure} is a triple $(P,L,I)$ consisting of a set $P$ of {\it points}, a set $L$ of {\it lines} and an {\it incidence relation} $I$ indicating which points lie on which lines. Let $c,d,m,n\in\mathbb{Z}$ such that $0<c<n$, $d<m$ and $cm = dn$.
A {\it con\-fi\-gu\-ra\-tion} $R=(m_c,n_d)$ is an incidence structure of $m$ points and $n$ lines with $c$ lines through each point and $d$ points on each line \cite{Cox}.
Its {\it Levi graph} $L=L(R)=L(m_c,n_d)$ is the bipartite graph with:


\noindent{\bf(a)} $m$ ``black'' vertices representing the points of $R$;\\
{\bf(b)} $n$ ``white'' vertices representing the lines of $R$; and \\
{\bf(c)} an edge joining each pair composed by a ``black'' vertex and a ``white'' vertex repre-

senting respectively a point and a line (in which that point lies) incident in $R$.


\noindent To any $R=(m_c,n_d)$ we associate its {\it Menger graph}, whose vertices are the points of $R$, each two, say $u$ and $v$,  joined by an edge $e=uv$ whenever $u$ and $v$ are in a common line.

If $m=n$ and $c=d$, in which case $R$ is said to be {\it symmetric}, the dual con\-fi\-gu\-ra\-tion $\overline{R}$ is defined by reversing the roles of points and lines in $R$.
In this case, both $R$ and $\overline{R}$ share the same Levi graph, but the black-white coloring of their vertices is reversed. If $R$ is isomorphic to $\overline{R}$, in which case $R$ is said to be {\it self-dual}, an
isomorphism between $R$ and $\overline{R}$ is called a {\it duality} and we simply denote $R=(n_d)$.

\remark\rm Theorem~\ref{t6.1} in Section~\ref{s6} below asserts that each $G_r^\sigma$ 
is the Menger graph of a con\-fi\-gu\-ra\-tion $(|V(G_r^\sigma)|_{m_0},(\ell_0)_{2s})$ whose points and lines are the vertices and copies of $K_{2s}$ in $G_r^\sigma$\,, respectively. 
For example, $G_4^2$ is the Menger graph of a con\-fi\-gu\-ra\-tion $(210_{12},630_4)$. On the other hand, if $(r,\sigma)=(3,1)$ then the said con\-fi\-gu\-ra\-tion is self-dual and its Menger graph coincides with the corresponding dual Menger graph \cite{Dej2}.

\vspace*{2mm}

A {\it projective space} $\mathbf{P}$ is an incidence structure $(P,L,I)$ satisfying three conditions \cite{BR}:

\vspace*{1mm}

\noindent{\bf(1)} each two distinct points $a$ and $b$ are in exactly one line, called the {\it line through} $ab$;
\\{\bf(2)} (Veblen-Young) if $a, b, c, d$ are distinct points and the lines through $ab$ and $cd$ meet,

then so do the lines through $ac$ and $bd$;
\\{\bf(3)} any line has at least three points on it.

\vspace*{1mm}

A {\it subspace} of $\PP$ is a subset $X$ such that any line containing two points of $X$ is a subset of $X$, where the full and empty spaces are considered as subspaces of $\PP$.

The {\it dimension} of $\PP$ is the largest number $r$ for which there is a strictly ascending chain of subspaces in $\PP$ of the form
$\{\emptyset=X^{-1}\subset X^0\subset\cdots\subset X^r=\PP\}$; in this case, $\PP$ is said to be a {\it projective $r$-space} $\PP^r$.

An {\it affine $r$-space} $A(r)$ is obtained by removing a copy of $\PP^{r-1}$ from $\PP^r$. Conversely, an affine $r$-space $A(r)$ leads to a projective $r$-space $\PP^r$, the {\it closure} of $A(r)$, by adding the corresponding $(r-1)$-subspace $\PP^{r-1}$ (said to be a {\it subspace at $\infty$}) whose points correspond to the classes of parallel lines, taken as the {\it directions of parallelism} of $A(r)$.

The projective space over the field $\F_2^r$ of $2^r$ elements ($r>2$) is said to be the {\it binary projective $(r-1)$-space} $\PP_2^{r-1}$ (Fano plane, if $r=3$). Since each point of $\PP_2^{r-1}$ represents a line $\ell$ of $\F_2^r$, that consists of two elements $f,g$ of $\F_2^r$ one of which, say $f$, is the null element $\mathbf{0}\in\F_2^r$, then we represent each point of $\PP_2^{r-1}$ by the $r$-tuple that stands for $g$, say $g=a_0a_1\ldots a_{r-1}(\neq{\mathbf 0})$, written without parentheses and commas. Since this $r$-tuple is composed by 0\thinspace s and 1\thinspace s, then it may be read from left to right as a binary number by removing the zeros preceding its leftmost 1, and we represent $g$ by the resulting integer.

If $a,b\in\mathbb{Z}$ with $a<b$, we denote the integer interval $\{a,a+1,\cdots,b\}$ by $[a,b]=(a-1,b]=[a,b+1)=(a-1,b+1)\subset\mathbb{Z}$.
We say that the empty set of $\PP_2^{r-1}$ is a $(-j)$-{\it subspace} of $\PP_2^{r-1}$ ($0\le j\in\mathbb{Z}$). Otherwise, $\PP_2^{r-1}$ can be taken as the nonzero part of $\F_2^r$. So, if $j\in[0,r-2]$, then each $j$-subspace of $\PP_2^{r-1}$ is taken as the intersection of $\F_2^r\!\setminus\!\{\mathbf{0}\}$ with an $\F_2$-linear $j$-subspace of $\F_2^r$.

Each of the $n=2^r-1$ points $a_0a_1\ldots a_{r-1}$ in $\PP_2^{r-1}$ is re-denoted by the integer it represents as a binary $r$-tuple (with hexadecimal read-out, if $r\le 4$) in which case the reading must be started at the leftmost $a_i\ne 0$ ($i\in[0,r)$). This way $(0,2^r)$ is taken to represent $\PP_2^{r-1}$.

We identify $\PP_2^{r-2}$ with the $(r-2)$-subspace of $\PP_2^{r-1}$ represented by the integer interval $(0,2^{r-1})$ and call it the {\it initial copy} ${\mathcal P}_2^{r-2}$ of $\PP_2^{r-2}$ in $\PP_2^{r-1}$.

The points of $\PP_2^{r-2}$ are taken as the {\it directions of parallelism} of the affine space $A(r-1)$ obtained from $\PP_2^{r-1}\setminus \PP_2^{r-2}$ by {\it puncturing} the first entry $a_0=1$ of its points $a_0a_1\ldots a_{r-1}=1a_1\ldots a_{r-1}$. Each of the $2^{r-2}-1$ $(r-3)$-subspaces $S$ of the initial copy ${\mathcal P}_2^{r-2}$ in $\PP_2^{r-1}$ yields exactly two non-initial $(r-2)$-subspaces of $\PP_2^{r-1}$, namely:

\vspace*{2mm}

\noindent{\bf(i)} an $(r-2)$-subspace formed by the points of $S$ and the {\it complements in} $n=2^r-1$ of

the points $i\in \PP_2^{r-2}\setminus S$, namely the points $n-i$;

\vspace*{2mm}

\noindent{\bf(ii)} an $(r-2)$-subspace formed by the point $n=2^r-1$, the points $i$ of $S$ and their

complements $n-i$ in $n$.

\vspace*{2mm}

\noindent This representation of the $(r-3)$-subspaces of $\PP_2^{r-1}$ determines a representation of the corresponding subspaces of $A(r)$ and that of their complementary subspaces in $\PP_2^{r-1}$\,, these considered as subspaces at $\infty$.

Any subspace of $\PP_2^{r-1}$ of positive dimension is
presentable via an initial copy of a lower-dimensional subspace, by an immediate
generalization of items {\bf(i)}-{\bf(ii)}.

\example\rm $\PP_2^2$ is formed by the nonzero binary 3-tuples $001$,
$010$, $011$, $100$, $101$, $110$, $111$, re-denoted respectively by
their hexadecimal integer forms: $1,2,3,4,5,6,7$.
So,
$\PP_2^2\subset \PP_2^3$ is represented as $\{1,\ldots,7\}$ immersed
into $\{1,\ldots,$ $f$ $=15\}$ by sending $1:=001$ onto $1:=0001$;
$2:=010$ onto $2:=0010$, etc., that is, by prefixing a zero to each
3-tuple.

Now, puncturing the first entry of the 4-tuples of $\PP_2^3$
(but writing the punctured entry between parentheses)
yields: $(0)001$ as the direction of parallelism of the affine lines
of $A(3)$ with point sets  (rewritten in hexadecimal notation in $\PP_2^3$ without delimiting braces or separating commas): $\{(1)000,(1)001\}=89$,
$\{(1)010,(1)011\}=ab$, $\{(1)100,(1)101\}=cd$,
$\{(1)110,(1)111\}=ef$.

We mention now the binary projective 1-space $\PP_2^1$, formed by the
points 1, 2, 3 and the line 123.
Items {\bf(i)} and {\bf(ii)} apply to $\PP_2^1$ and determine
the respective planes $123ba98=123(f-4)(f-5)(f-6)(f-7)$ and
$123fedc=123f(f-1)(f-2)(f-3)$ in $\PP_2^3$.

By writing the members of the initial copy ${\mathcal P}_2^{r-2}$ in $\PP_2^{r-1}$ in their numerical order, then writing the complement of each of them immediately underneath, and finally 
the symbol $n$ under the two resulting rows, 
we distinguish the codimension-1 subspaces (also called projective hyperplanes)
of $\PP_2^{r-1}$, other than ${\mathcal P}_2^{r-2}$, with their elements in bold font, in contrast with the remaining elements, in normal font, as shown here for $r=3$, and partially for $r=4$:

$$\begin{array}{|cccccc|ccccccc|}
^{\mathbf{1}23}_{6\mathbf{54}}&^{\mathbf{1}45}_{\mathbf{6}54}&
^{1\mathbf{2}3}_{\mathbf{6}5\mathbf{4}}&^{1\mathbf{2}3}_{6\mathbf{5}4}&
^{12\mathbf{3}}_{\mathbf{65}4}&^{12\mathbf{3}}_{65\mathbf{4}}&
^{\mathbf{123}4567}_{edc\mathbf{ba98}}&^{\mathbf{123}4567}_{\mathbf{edc}ba98}&
^{\mathbf{1}23\mathbf{45}67}_{e\mathbf{dc}ba\mathbf{98}}&^{\mathbf{1}23\mathbf{45}67}_{\mathbf{e}dc\mathbf{ba}98}
&^{\cdots}_{\cdots}&
^{12\mathbf{34}56\mathbf{7}}_{\mathbf{ed}cb\mathbf{a9}8}&^{12\mathbf{34}56\mathbf{7}}_{ed\mathbf{cb}a9\mathbf{8}}\\
^7&^{\mathbf{7}}&^7&^{\mathbf{7}}&^7&^{\mathbf{7}}&^f&^{\mathbf{f}}&^f&^{\mathbf{f}}&^{\cdots}&^{f}&^{\mathbf{f}}
\end{array}$$

Let $(r,s)\in\mathbb{Z}^2$, $r>2$, $\sigma\in(0,r-1)$ and $A_0$ be a $\sigma$-subspace of $\PP_2^{r-1}$. The set of $(\sigma+1)$-subspaces of $\PP_2^{r-1}$ that contain $A_0$ is said to be the $(r,\sigma)$-{\it pencil} of $\PP_2^{r-1}$ {\it through} $A_0$. A linearly ordered presentation of such a set is an $(r,\sigma)$-{\it ordered pencil} of $\PP_2^{r-1}$ {\it through} $A_0$.
There are $(2^{r-\sigma}-1)!$ $(r,\sigma)$-ordered pencils of $\PP_2^{r-1}$ through $A_0$, since there are $2^{r-\sigma}-1$ $(\sigma+1)$-subspaces
containing $A_0$ in $\PP_2^{r-1}$.

An $(r,\sigma)$-ordered pencil $v$
of $\PP_2^{r-1}$ through $A_0$ has the form $v=(A_0\cup
A_1,\ldots,A_0\cup A_{m_0})$, where $A_1,$ $\ldots,$ $A_{m_0}$ are
the nontrivial cosets of $\F_2^r$ mod its subspace
$A_0\cup\{\mathbf{0}\}$\,, with $m_0=2^{r-\sigma}-1$. As a shorthand
for this, we just write $v=(A_0,A_1,\ldots,A_{m_0})$ and consider
$A_1,\ldots,A_{m_0}$ as the {\it non-initial} entries of $v$. 

\section{Graphs of Ordered Pencils}\label{s3}

\begin{definition}\label{d2} Let ${\mathcal G}_r^\sigma$ be the graph whose vertices are the
$(r,\sigma)$-ordered pencils $v=(A_0,A_1,\ldots,A_{m_0})$ $=(A_0(v),A_1(v),\ldots,A_{m_0}(v))$ of $\PP_2^{r-1}$\,, with an edge precisely between each two vertices $v=(A_0,A_1\ldots,A_{m_0})$ and $v'=(A'_0,A'_1\ldots,$ $A'_{m_0})$ that satisfy the following three conditions:

\vspace*{2mm}

\noindent{\bf 1.} $A_0\cap A'_0$ is a $(\sigma-1)$-subspace of $\PP_2^{r-1}$;

\vspace*{2mm}

\noindent{\bf 2.} $A_i\cap A'_i$ is a nontrivial coset of $\F_2^r$ mod $(A_0\cap A'_0)\cup\{\mathbf{0}\}$, for each $1\le i\le m_0$;

\vspace*{2mm}

\noindent{\bf 3.} $\cup_{i=1}^{m_0}(A_i\cap A'_i)$ is an $(r-2)$-subspace of $\PP_2^{r-1}$.
\vspace*{2mm}

\noindent Let $v_r^\sigma$ be the lexicographically smallest $(r,\sigma)$-ordered pencil which is a vertex of ${\mathcal G}_r^\sigma$ and let $G_r^\sigma$ be the component of ${\mathcal G}_r^\sigma$ containing $v_r^\sigma$. 
\end{definition}
\vspace*{2mm}

Item {\bf 3} in Definition~\ref{d2} is needed only if $(r,\sigma)\neq(3,1)$; in this case, let us denote the $(r-2)$-subspace of $\PP_2^{r-1}$ required in such item {\bf 3} by $U(v,v')=\cup_{i=1}^{m_0}(A_i\cap A'_i)$. 

\example\rm
Let $u_r^\sigma$ be the lexicographically smallest neighbor of $v_r^\sigma$ in $G_r^\sigma$.
Then:
$$\begin{array}{lll}
^{v_3^1=(1,23,45,67),} & ^{u_3^1=(2,13,46,57),} & ^{U(v_3^1,u_3^1)=347;} \\
^{v_4^1=(1,23,45,67,89,ab,cd,ef),} &
^{u_4^1=(2,13,46,57,8a,9b,ce,df),} & ^{U(v_4^1,u_4^1)=3478bcf;} \\
^{v_4^2=(123,4567,89ab,cdef),} & ^{u_4^2=(145,2367,89cd,abef),} &
^{U(v_4^2,u_4^2)=16789ef.}
\end{array}$$

\begin{theorem}\label{t3.5} Both ${\mathcal G}_r^\sigma$ and $G_r^\sigma$ are
$\{K_{2s},T_{st,t}\}_{K_2}$-H (respectively, $\{K_{2s}\not\subset T_{st,t},T_{st,t}\}_{K_2}$-H), if $t<2s$ (respectively, $t\ge 2s$). Moreover, ${\mathcal
G}_r^\sigma$ is: {\bf(a)} of order ${r\choose\sigma}_2 m_0!$,
where ${r\choose\sigma}_2=\Pi_{i=1}^{r-\sigma}\frac{2^{i+\sigma}-1}{2^i-1}$ is Gaussian binomial coefficient (number of $\sigma$-subspaces $A_0$ in $\PP_2^{r-1}$); {\bf(b)} $s(t-1)m_0$-regular; {\bf(c)} uniquely representable as an edge-disjoint union
of  $m_0|V({\mathcal G}_r^\sigma)|s^{-1}t^{-1}$ (respectively,
$(2^\sigma-1)|V({\mathcal G}_r^\sigma)|$) copies of
$K_{2s}$ (respectively, $T_{st,t}$), with exactly $m_0$
{\rm(}respectively, $m_1${\rm)} such copies incident to each vertex, no two
sharing more than one vertex, and each edge of ${\mathcal
G}_r^\sigma$ present in exactly one such copy. In case $r-\sigma=2$, then $G_r^\sigma$ is
$K_4$-UH.
\end{theorem}

A proof of Theorem~\ref{t3.5} is given in Subsection~\ref{s4} prior to Proposition~\ref{3.1}. The following conjecture is confirmed for $r\le 8$ (via Proposition~\ref{3.1}) and for $\rho=r-\sigma\le 5$ (via Propositiom~\ref{4.6}) in Theorem~\ref{t6.1}.

\begin{conjecture}\label{2.1} A graph $G_r^\sigma$ answering affirmatively {\rm Question~\ref{1.3}} (or modified for $\{X_0\not\subset X_1,X_1\}$-H graphs, if $t\ge 2s$) has
$m_0= 2^\rho-1$, $m_1=2s(2^\sigma-1)$,
$\ell_0=\frac{m_0}{st}|V(G_r^\sigma)|$, $\ell_1=(2^\sigma-1)|V(G_r^\sigma)|$, with  $|V(G_r^\sigma)|=
\Pi_{i=1}^\rho(2^{i-1}(2^{i+\sigma}-1))={r\choose\sigma}_2\Pi_{i=1}^\rho(2^{i-1}(2^i-1))$.
\end{conjecture}

\remark\label{I}\rm For each $(r-1,\sigma-1)$-ordered pencil
$U=(U_0,U_1,\ldots,U_{m_0})$ of an $(r-2)$-subspace of
$\PP_2^{r-1}$ (where $m_0$ is as in Definition~\ref{d2}, and $U_0=\emptyset$ if $\sigma=1$)
there is a copy
$[U]_r^\sigma=[U_0,U_1,\ldots,U_{m_0}]_r^\sigma$ of $K_{2s}$ in
${\mathcal G}_r^\sigma$
induced by those $(A_0,A_1,\ldots,A_{m_0})\in V({\mathcal
G}_r^\sigma)$ with $A_i\supset U_i$, for $1\le i\le m_0$. 
For example, the induced copies of $K_8$ in $G_4^1$ incident to $v_4^1$ are:
$$\begin{array}{lllll}
&[\emptyset,2,4,6,8,a,c,e\,]_4^1,&[\emptyset,3,4,7,8,b,c,f]_4^1,&[\emptyset,2,5,7,8,a,d,f]_4^1,    &[\emptyset,3,5,6,8,b,d,e]_4^1,\\
&[\emptyset,2,4,6,9,b,d,f]_4^1,  &[\emptyset,3,4,7,9,a,d,e]_4^1,&[\emptyset,2,5,7,9,b,c,\,e\,]_4^1&[\emptyset,3,5,6,9,a,c,f]_4^1.
\end{array}$$

As an additional example, the induced copies of $K_4$ in $G_4^2$ incident to $v_4^2$ are:
$$\begin{array}{llllll}
\![1,45,89,cd]_4^2,\!\!&\!\![1,67,89,ef]_4^2, \!\!&\!\![2,57,8a,df]_4^2, \!\!&\!\![2,46,8a,ce]_4^2, \!\!&\!\![3,47,8b,cf]_4^2, \!\!&\!\![3,56,8b,de]_4^2,\\
\![1,45,ab,ef]_4^2, \!\!&\!\![1,67,ab,cd]_4^2, \!\!&\!\![2,57,9b,ce]_4^2, \!\!&\!\![2,46,9b,df]_4^2, \!\!&\!\![3,47,9a,de]_4^2, \!\!&\!\![3,56,9a,cf]_4^2.
\end{array}$$

\remark\label{II}\rm For each $(\sigma+1)$-subspace $W$ of $\PP_2^{r-1}$ and
each $i\in[1,m_0]$, there is a copy $[(W)_i]_r^\sigma$ of
$T_{st,t}$ induced in ${\mathcal G}_r^\sigma$ by the vertices
$(A_0,A_1,\ldots,$ $A_{m_0})$ of ${\mathcal G}_r^\sigma$ having
$A_0$ as a $\sigma$-subspace of $W$ and 
$A_i\subset\PP_2^{r-1}\setminus A_0$\,, for $i=1,\ldots,m_0$. 
For example, the three 4-vertex parts
of the lexicographically first and last (of the seven) copies of
$T_{12,3}$ in $G_4^1$ incident to $v_4^1$, namely
$[(\PP_2^1)_1]_4^1=[(123)_1]_4^1$ and $[(\PP_2^1)_7]_4^1=[(1ef)_7]_4^1$, are denoted (columnwise):

$$\begin{array}{c|c|c|c|}\hline\hline
\,[(123)_1]_4^1 &
^{(1,23,45,67,89,ab,cd,ef)}_{(1,23,45,67,ab,89,ef,cd)} &
^{(2,13,46,57,8a,9b,ce,df)}_{(2,13,46,57,9b,8a,df,ce)} &
^{(3,12,47,56,8b,9a,cf,de)}_{(3,12,47,56,9a,8b,de,cf)} \\
& ^{(1,23,67,45,89,ab,ef,cd)}_{(1,23,67,45,ab,89,cd,ef)} &
^{(2,13,57,46,8a,9b,df,ce)}_{(2,13,57,46,9b,8a,ce,df)} &
^{(3,12,56,47,8b,9a,de,cf)}_{(3,12,56,47,9a,8b,cf,de)} \\\hline\hline
^{\cdots\cdots}_{\cdots\cdots} & ^{\cdots\cdots}_{\cdots\cdots} & ^{\cdots\cdots}_{\cdots\cdots} & ^{\cdots\cdots}_{\cdots\cdots} \\\hline\hline
\,[(1ef)_7]_4^1 &
^{(1,23,45,67,89,ab,cd,ef)}_{(1,23,ab,89,67,45,cd,ef)} &
^{(e,2c,4a,68,79,5b,3d,1f)}_{(e,2c,5b,79,68,4a,3d,1f)} &
^{(f,2d,4b,69,78,5a,3c,1e)}_{(f,2d,5a,78,69,4b,3c,1e)} \\
 & ^{(1,cd,45,89,67,ab,23,ef)}_{(1,cd,ab,67,89,45,23,ef)} &
 ^{(e,3d,4a,79,68,5b,2c,1f)}_{(e,3d,5b,68,79,4a,2c,1f)} &
 ^{(f,3c,4b,78,69,5a,2d,1e)}_{(f,3c,5a,69,78,4b,2d,1e)}\\\hline\hline
\end{array}$$
The first and last (of the 14) 2-vertex parts of the three
(columnwise) copies of $T_{14,7}$ in $G_4^2$ incident to $v_4^2$,
namely $[(\PP_2^2)_1]_4^2=[(1234567)_1]_4^2$, $[(12389ab)_2]_4^2$ and
$[(123cdef)_3]_4^2$, are:

$$\begin{array}{||c||c||c||}
[(1234567)_1]_4^2 & [(12389ab)_2]_4^2 & [(123cdef)_3]_4^2 \\ \hline
^{(123,4567,89ab,cdef)}_{(123,4567,cdef,89ab)} &
^{(123,4567,89ab,cdef)}_{(123,cdef,89ab,4567)}
& ^{(123,4567,89ab,cdef)}_{(123,89ab,4567,cdef)} \\\hline
^{\cdots\cdots}_{\cdots\cdots} & ^{\cdots\cdots}_{\cdots\cdots} & ^{\cdots\cdots}_{\cdots\cdots} \\\hline
^{(356,1247,8bde,9acf)}_{(356,1247,9acf,8bde)} &
^{(39a,47de,128b,56cf)}_{(39a,56cf,128b,47de)} &
^{(3de,479a,568b,12cf)}_{(3de,568b,479a,12cf)}\\\hline
\end{array}$$

\subsection{Automorphisms}\label{s4}

Recall from Definition~\ref{d2} that $G_r^\sigma$ is the connected component containing  the vertex $v_r^\sigma=(A_0(v_r^\sigma),\ldots,A_{m_0}(v_r^\sigma))$ in ${\mathcal G}_r^\sigma$. Let $W_r^\sigma=\{w\in V(G_r^\sigma): A_0(w)=A_0(v_r^\sigma)\}$. For each $w\in W_r^\sigma$, let $\epsilon_w$ be an automorphism of $G_r^\sigma$ such that $\epsilon_w(v_r^\sigma)=w$, a permutation of the non-initial entries of $v_r^\sigma$.
Let the composition of permutations be taken from left to right, that is with the leftmost factor acting first.
The {\it open neighborhood} $N_{G_r^\sigma}(w)$ of $w$ in $G_r^\sigma$ is the subgraph induced by the
neighbors of $w$. To characterize the automorphisms of $G_r^\sigma$, it suffices to determine  the subgroup ${\mathcal N}_r^\sigma={\mathcal A}(N_{G_r^\sigma}(v_r^\sigma))$ of ${\mathcal A}(G_r^\sigma)$ as well as the automorphisms $\epsilon_w$.
Proposition~\ref{3.1} below establishes the cardinality of ${\mathcal N}_r^\sigma$ for $r\le 8$, and thus the corresponding cardinality of ${\mathcal A}(G_r^\sigma)$.
The automorphisms $\epsilon_w$ yield important information about the order and diameter of the graphs $G_r^\sigma$.
In items {\bf(A)}-{\bf(C)} below, a set of generators for
${\mathcal N}_r^\sigma$ is given by means of products of transpositions of
the form $(\alpha\;\beta)$, where $\alpha$ and $\beta$ are two
affine $\sigma$-subspaces of $\PP_2^{r-1}$ that have a common  
$(\sigma-1)$-subspace $\theta_{\alpha,\beta}$ at $\infty$.
For any such $(\alpha\;\beta)$, we define the {\it affine difference} $\chi_{\alpha,\beta}$ to be the affine $\sigma$-subspace of $\PP_2^{r-1}$ formed by the third points $c$ in the lines determined by each two points $a\in\alpha$, $b\in\beta$.
Here, it suffices to take all such $c$ for a fixed $a\in\alpha$ and a variable $b\in\beta$.
Needed in display (1) below, we denote $(\alpha\;\beta)$ by
$[\theta_{\alpha,\beta}.\chi_{\alpha,\beta}(\alpha\;\beta)]$.
If $0<h\in\mathbb{Z}$, then a permutation $\phi$ of the affine
$\sigma$-subspaces of $\PP_2^{r-1}$ that is a product of
transpositions $(\alpha_i\;\beta_i)$, for $1\le i\le h$, with a
common $\theta=\theta_{\alpha_i,\beta_i}$ and a common
$\chi=\chi_{\alpha_i,\beta_i}$ will be indicated 
\begin{equation}\phi=
\prod_{i=1}^h(\alpha_i\;\beta_i)=[\theta.\chi\prod_{i=1}^h(\alpha_i\;\beta_i)],\end{equation}
where the points of $\PP_2^{r-1}$ that are not in the pairs of
parentheses $(\alpha_1\;\beta_1),\cdots,(\alpha_h\;\beta_h)$
are fixed points of $\phi$.
To each such $\phi$ we associate a permutation $\psi$ of $\PP_2^{r-1}$ that permutes the non-initial entries of the ordered pencils that are
vertices of $G_r^\sigma$. Concretely, $\psi$ is obtained by replacing each number $a$ in
the entries of the transpositions of $\phi$ by the integer
$\lfloor a/2^\sigma\rfloor$ and setting only one
representative of each set of repeated resulting transpositions in expressing $\psi$.
We write $\omega=(\Pi\phi).\psi$ to express a product of
permutations $\phi$ of $G_r^\sigma$ having a common associated
permutation $\psi$. It is convenient to write
$\Pi\phi=\phi^{\,\omega}$ and
$\psi=\psi^{\,\omega}$. In this context, $()$ stands for the identity permutation.
Now, a set of generators of ${\mathcal A}(G_r^\sigma)$ is formed by those $\omega=\phi^{\,\omega}.\psi^{\,\omega}$ expressible as follows:

\vspace*{1mm}

\noindent{\bf(A)} Given a point $\pi\in \PP_2^{\rho}=\PP_2^{r-\sigma}$
and an $(r-2)$-subspace $\alpha$ of $\PP_2^{r-1}$ containing
$\{\pi\}\cup \PP_2^{\sigma-1}$\,, let
$\phi^{\,\omega}=\phi^{\,\omega}(\pi,\alpha)$ be the product of all
transpositions of affine $\sigma$-spaces of $\PP_2^{r-1}$ with a
common $(\sigma-1)$-subspace at $\infty$ in $\PP_2^{\sigma-1}$ and a common affine
difference containing $\pi$ and contained in $(\alpha\setminus
\PP_2^{\sigma-1})$. Let $\psi^{\,\omega}(\pi,\alpha)$ be the
$\psi^{\,\omega}$ associated to $\phi^{\,\omega}$. 
Some examples of triples $(\omega,\pi,\alpha)$ here are (in hexadecimal notation or its continuation in the English alphabet, from $10=a$ passing through $15=f$ and up to $31=v$) as follows:

$$\begin{array}{llll}
^{G_3^1:}&
^{(\omega=[\emptyset.2(4\;6)(5\;7)].1(2\;3),}_{(\omega=[\emptyset.3(4\;7)(5\;6)].1(2\;3),}
& ^{\pi=2,}_{\pi=3,} & ^{\alpha=123);}_{\alpha=123);} \\
&^{(\omega=[\emptyset.6(2\;4)(3\;5)].3(1\;2),}_{(\omega=[\emptyset.1(2\;3)(6\;7)].(),}
& ^{\pi=6,}_{\pi=1,} & ^{\alpha=167);}_{\alpha=145).} \\
&&&\\
^{G_4^1:} &
^{(\omega=[\emptyset.2(8\;a)(9\;b)(c\;e)(d\;f)].1(4\;5)(6\;7),}_{(\omega=[\emptyset.4(8\;c)(9\;d)(a\;e)(b\;f)].2(4\;6)(5\;7),}
& ^{\pi=2,}_{\pi=4,} & ^{\alpha=1234567);}_{\alpha=1234567);} \\
&
^{(\omega=[\emptyset.2(4\;6)(5\;7)(8\;a)(9\;b)].1(2\;3)(4\;5),}_{(\omega=[\emptyset.c(4\;8)(5\;9)(6\;a)(7\;b)].6(2\;4)(3\;5),}
& ^{\pi=2,}_{\pi=c,} & ^{\alpha=123cdef);}_{\alpha=123cdef);} \\
&
^{(\omega=[\emptyset.5(8\;d)(9\;c)(a\;f)(b\;e)].2(4\;6)(5\;7);}_{(\omega=[\emptyset.1(2\;3)(6\;7)(a\;b)(e\;f)].();
}  & ^{\pi=5,}_{\pi=1,} & ^{\alpha=1234567);}_{\alpha=14589cd);} \\
&
^{(\omega=[\emptyset.6(2\;4)(3\;5)(a\;c)(b\;d)].3(1\;2)(5\;6);}
& ^{ \pi=6,} & ^{\alpha=16789ef).}\\
\end{array}$$

$$\begin{array}{llll}
^{G_4^2:}&
^{(\omega=[ 1.45(89\,cd)(a\!b\,\,e\!f)][ 2.46(8a\,\,ce)(9b\,\,df)][3.47(8\!b\,\,c\!f)(9a\,\,de)].1(2\,3),}_{(\omega=[
1.cd(45\,89)(67\,\,a\!b)][ 2.ce(46\,\,8a)(57\,9b)][
3.c\!f(47\,8b)(56\,9a)].3(1\,2),}&^{\pi=4,;}_{\pi=c,}&^{\alpha=1234567)}_{\alpha=123cdef).} \\
_{G_5^2:}&
_{(\omega=[1.op(89\;gh)(ab\;ij)(cd\;kl)(ef\;mn)][2.oq(8a\;gi)(9b\;hj)(ce\;km)(df\;ln)]}& \\
& ^{\hspace*{5.4mm}[3.or(8b\;gj)(9a\;hi)(cf\;kn)(de\;lm)].6(2\;4)(3\;5),}&^{\pi=o,}&^{\alpha=1234567opqrstuv).} \\
^{G_5^3}&
^{(\omega=[123.89ab(ghij\;opqr)(klmn\;stuv)][145.89cd(ghkl\;opst)(ijmn\;qruv)]}_{\hspace*{5.4mm}[167.89ef(ghmn\;opuv)(ijkl\;qrst)][246.8ace(gikm\;oqsu)(hjln\;prtv)]}&
\\
&^{\hspace*{5.4mm}[257.8adf(giln\;oqtv)(hjkm\;prsu)][347.8bcf(gjkn\;orsv)(hilm\;pqtu)]}_{\hspace*{5.4mm}[356.8bde(gjkn\;ortu)(hilm\;pqsv)].\,1(2\;3),}&_{\pi=8,}&_{\alpha=123456789abcdef).} \\
\end{array}$$

\noindent{\bf(B)} Given a point $\pi\in\PP_2^{\sigma-1}$ and an $(r-2)$-subspace $\alpha$ of $\PP_2^{r-1}$
containing $\PP_2^{\rho-1}$\,, let $\phi^{\,\omega}=\phi(\pi,\alpha)$
be the product of the transpositions of pairs of affine
$\sigma$-subspaces of $\PP_2^{r-1}$ not contained in
$(\alpha\setminus \PP_2^{\sigma-1})$ with a common $(\sigma-1)$-subspace
$\pi$ at $\infty$ and a common affine difference
$(\PP_2^{\sigma-1}\setminus\pi)$. In each case, $\psi^{\,\omega}=()$.
Some triples $(\omega,\pi,\alpha)$ are:

$$\begin{array}{llll}
^{G_4^2:}&
^{(\omega=[1.23(89\;ab)(cd\;ef)].(),}_{(\omega=[1.23(45\;67)(cd\;ef)].(),}
& ^{\pi=1,}_{\pi=1,} & ^{\alpha=1234567);}_{\alpha=12389ab);}\\
&^{(\omega=[1.23(45\;67)(89\;ab)].(),}_{(\omega=[2.13(8a\;9b)(ce\;df)].(),}
& ^{\pi=1,}_{\pi=2,} & ^{\alpha=123cdef);}_{\alpha=1234567);}\\
&^{(\omega=[2.13(46\;57)(ce\;df)].(),}_{(\omega=[2.13(46\;57)(8a\;9b)].(),}
& ^{\pi=2,}_{\pi=2,} & ^{\alpha=12389ab);}_{\alpha=123cdef);}\\
&^{(\omega=[3.12(8b\;9a)(cf\;de)].(),}_{(\omega=[3.12(47\;56)(cf\;de)].(),}
& ^{\pi=3,}_{\pi=3,} &
^{\alpha=1234567);}_{\alpha=12389ab);}\\
&^{(\omega=[3.12(47\;56)(8b\;9a)].(),} & ^{\pi=3,} &
^{\alpha=123cdef).}\\
^{G_5^2:}&
^{(\omega=[2.13(gi\;hj)(km\;ln)(oq\;pr)(su\;tv)].(),}_{(\omega=[2.13(8a\;9b)(ce\;df)(pr\;oq)(su\;tv)].(),}
&
^{\pi=2,}_{\pi=2,} & ^{\alpha=123456789abcdef);}_{\alpha=1234567ghijklmn);} \\
&^{(\omega=[1.23(89\;ab)(cd\;ef)(op\;qr)(st\;uv)].(),}_{(\omega=[1.23(gh\;ij)(kl\;mn)(op\;qr)(st\;uv)].(),}
&
^{\pi=1,}_{\pi=1,} & ^{\alpha=1234567ghijklmn);}_{\alpha=123456789abcdef);} \\
&^{(\omega=[3.12(8b\;9a)(cf\;de)(or\;pq)(sv\;tu)].(),}_{(\omega=[3.12(gj.hi)(kn\;ln)(or\;pq)(st\;tu)].(),}
&
^{\pi=3,}_{\pi=3,} & ^{\alpha=1234567ghijklmn);}_{\alpha=123456789abcdef).} \\
&&&\\
^{G_5^3:}&
^{\omega=([347.1256(gjkn\;hilm)(pqsv\;ortu)].(),} & ^{\pi=347,} & ^{\alpha=123456789abcdef).} \\
\end{array}$$

\noindent{\bf(C)} Given a point $\pi\in\PP_2^{\sigma-1}$ and an $(r-2)$-subspace $\alpha$ of $\PP_2^{r-1}$
with $\pi\in\alpha$, let $\phi^{\,\omega}$ be
the product of the transpositions of pairs of affine
$\sigma$-subspaces of $\PP_2^{r-1}$ not contained in $\alpha$ with
common $(\sigma-1)$-subspace at $\infty$ contained in $\alpha$ and common affine
difference contained in $\alpha$ and containing $\pi$. Again,
$\psi^{\,\omega}=()$. Some triples $(\omega,\pi,\alpha)$ here
are:

$$\begin{array}{ll}
^{G_4^2:} &
^{(\omega=[4.15(26\;37)][5.14(27\;36)][8.19(2a\;3b)][9.18(2b\;3a)][c.1d(2e\;3f)][d.1c(2f\;3e)].(),\hspace*{1mm}\pi=1,\hspace*{4mm}\alpha=3478bcf);}
_{(\omega=[4.37(15\;26)][7.34(16\;25)][8.3b(19\;2a)][b.38(1a\;29)][c.3f(1d\;2e)][f.3c(1e\;2d)].(),\hspace*{1mm}\pi=3,\hspace*{4mm}\alpha=1459cd).}\\
&\\
^{G_5^2:} &
^{(\omega=[4.37(15\;26)][7.34(16\;25)][8.3b(19\;2a)][b.38(1a\;29)][c.3f(1d\;2e)]}
_{\hspace*{5.4mm}[f.3c(1e\;2d)][g.3j(1h\;2i)][j.3g(1i\;2h)][k.3n(1l\;2m)][n.3k(1m\;2l)]}\\
&^{\hspace{5.4mm}[o.3r(1p\;2q)][r.3o(1a\;2p)][v.3s(1u\;2t)][s.3v(1t\;2u)].(),\hspace*{33.5mm}\pi=3,\hspace*{5mm}\alpha=3478bcfgjknorsv).} \\
_{G_5^3:} &
_{(\omega=[189.67ef(23ab\;45cd)][1ef.6789(23cd\;45ab)][1gh.67mn(23ij\;45kl)]}\\
&
^{\hspace*{5.4mm}[1mn.67gh(23kl\;45ij)][1op.67uv(23qr\;45st)][1uv.67op(23st\;45qr)].(),\hspace*{13mm}
\pi=167,\;\;\;\alpha=16789efghmnopuv);} \\
&
^{(\omega=[189.23ab(45cd\;67ef)][1ab.2389(45ef\;67cd)]1kl.23mn(45gh\;67ij)]}_{\hspace*{5.4mm}[1mn.23kl(45ij\;67gh)][1st.23uv(45op\;67qr)][1uv.23st(45qr\;67op)].(),\hspace*{13mm}\pi=123,\;\;\;\alpha=12389abklmnstuv).} \\
\end{array}$$

\begin{proof} ({\it of {\rm Theorem~\ref{t3.5}}})  We treat the case $t<2s$ and leave remaining details to the reader. Because of the properties of $\PP_2^{r-1}$ provided in Sections~\ref{s2}-\ref{s3}, ${\mathcal G}_r^\sigma$ and its connected components satisfy Definition 1.1 with ${\mathcal C}={\mathcal C}'$, where ${\mathcal C}'$ is formed by members of the class (A), namely the
copies of $X_0=K_{2s}$ in Remark~\ref{I}, and members of the class (D),
namely  the copies of $X_1=T_{st,t}$ in Remark~\ref{II}. Moreover,
${\mathcal G}_r^\sigma$ is a ${\mathcal C}'$-H graph
uniquely expressible as an edge-disjoint union $U_0$ of copies of
$X_0$\,, namely those in Remark~\ref{I}, and as an edge-disjoint union
$U_1$ of copies of $X_1$, namely those in Remark~\ref{II}. Let us see
it satisfies items {\bf(i)}-{\bf(iv)} of Conjecture~\ref{072220}: 

Item {\bf(i)} holds because
${\mathcal C}'$ contains just one copy of $K_{2s}$ and one copy of $T_{st,t}$.
Item {\bf(iv)} holds because
each edge of ${\mathcal G}_r^\sigma$ is a projective line of $\PP_2^{r-1}$ and thus the intersection of the projective subspaces represented by
a corresponding copy of $K_{2s}$ as in Remark~\ref{I} and a corresponding copy of 
$T_{st,t}$ as in Remark~\ref{II}. 

For example for $(r,\sigma)=(4,1)$, the copies of $K_{2s}=K_8$ and $T_{st,t}=T_{12,3}$ denoted respectively by $[\emptyset,2,4,6,8,a,c,e\,]_4^1$  and $[(123)_1]_4^1$ intersect just at the adjacent vertices $v_4^1=(1,23,45,67,89,ab,cd,ef)$ and
$w_4^1=(3,12,47,56,8b,9a,cf,de)$, and at the edge $(v_4^1,w_4^1)$. 
For $(r,\sigma)=(4,2)$, the copies of $K_{2s}=K_4$ and $T_{st,t}=T_{14,2}$ denoted respectively by $[1,45,89,cd\,]_4^2$  and $[(1234567)_1]_4^2$ intersect just at the adjacent vertices $v_4^2=(123,4567,89ab,cdef)$ and
$w_4^2=(145,2367,89cd,abef)$, and at the edge $v_4^2,w_4^2$.  

This way, the lexicographically smallest
edge in ${\mathcal G}_r^\sigma$ belongs both to the
lexicographically smallest copies of $X_0$ and $X_1$ and
constitutes their intersection. This situation is carried out to the
remaining edges of ${\mathcal G}_r^\sigma$ via the automorphisms presented above. Item {\bf(ii)} holds
since no copy of $T_{st,t}$ is obtained other than those
in Remark~\ref{II}.
In fact, no copy of $K_{2s}$ in Remark~\ref{I} contains
a copy of $T_{st,t}$\,, and no copy of $K_{2s}$ not
contained in a copy of $T_{st,t}$ exists in ${\mathcal G}_r^\sigma$
other than those in Remark~\ref{I}, this fact concluded from item {\bf(iv)}.
 Here
we took care of cases such as $(r,\sigma)=(4,1)$ for which copies or
$K_{2s}$ may be found inside any copy of $T_{st,t}$. As for item
{\bf(iii)}, recall that a vertex $A$ of ${\mathcal G}_r^\sigma$ is an
$(r,\sigma)$-ordered pencil of $\PP_2^{r-1}$ through some
$\sigma$-subspace $A_0$ of $\PP_1^{r-1}$. This contains the
$(r-1,\sigma-1)$-ordered pencils $U=(U_0,U_1,\ldots,U_{m_0})$ of the
$(r-2)$-subspaces of $\PP_2^{r-1}$ induced by the vertices
$(A_0,A_1,\ldots,A_{m_0})$ of ${\mathcal G}_r^\sigma$ with
$A_i\supset U_i$, for $1\le i\le m_0$, The corresponding copies
$[U]_r^\sigma=[U_0,U_1,\ldots,U_{m_0}]_r^\sigma$ of $K_{2s}$ are
just all those containing $A$. Clearly, $A$ is the only vertex of
${\mathcal G}_r^\sigma$ that these copies have in common, which takes
care of half of item {\bf(iii)}. As for the other half, the vertex $A$
is contained in the $(\sigma+1)$-subspaces $W$ of $\PP_2^{r-1}$ with
an index $i=i_W\in[1,m_0]$ determining a copy $[(W)_i]_r^\sigma$ of
$T_{st,t}$ induced in ${\mathcal G}_r^\sigma$ by the vertices
$(A_0,A_1,\ldots,$ $A_{m_0})$ of ${\mathcal G}_r^\sigma$ with $A_0$
as a $\sigma$-subspace of $W$ and $A_i\subset W\setminus A_0$\,,
for $i=1,\ldots,m_0$. Clearly, $A$ is the only vertex of ${\mathcal
G}_r^\sigma$ that these copies have in common, proving item {\bf(iii)}. Now, the first sentence in the
statement holds, as $G_r^\sigma$ is a component of ${\mathcal
G}_r^\sigma$. However, if $r-\sigma=2$, then in Definition~\ref{d2}, item {\bf 3} is implied by items {\bf 1}-{\bf 2},
which insures that both ${\mathcal G}_r^\sigma$ and $G_r^\sigma$ are
$K_4$-UH. On the other hand, the number of $\sigma$-subspaces $F'$
in $\PP_2^{r-1}$ is $\# F'={r\choose\sigma}_2$. For each such $F'$
taken as initial entry $A_0$ of some vertex $v$ of ${\mathcal
G}_r^\sigma$, there are $m_0$ classes mod $F'\cup\mathbf{0}$
permuted and distributed from left to right into the remaining
positions $A_i$ of $v$. Thus, $$|{\mathcal G}_r^\sigma|=(\#F')m_0!$$
Each vertex $v$ of ${\mathcal G}_r^\sigma$ is the intersecting vertex
of exactly $m_0$ copies of $T_{st,t}$. Since the regular degree of
$T_{st,t}$ is $s(t-1)$, then the regular degree of ${\mathcal
G}_r^\sigma$ is $s(t-1)m_0$. The edge numbers of $T_{st,t}$
and ${\mathcal G}_r^\sigma$ are respectively $s^2t(t-1)/2$ and
$s(t-1)m_0|V({\mathcal G}_r^\sigma)|/2$, so ${\mathcal
G}_r^\sigma$ is the edge-disjoint union of $m_0|V({\mathcal
G}_r^\sigma)|s^{-1}t^{-1}$ copies of $T_{st,t}$. \end{proof}

\begin{proposition}\label{3.1} For $\sigma>0$ and $\rho=r-\sigma>1$ {\rm(}so $r>2${\rm)}, let
$$\begin{array}{l}
A=2^{\sigma+1}-1+(\rho-2)(2^\sigma+1)+\max(\rho-3,0), \\
B=\Pi_{i=1}^{\rho}(2^i-1) \mbox{ and }  \\
C=(2^\sigma-1)!
\end{array}$$
Then, at least for $r\le 8$,  the cardinality of ${\mathcal N}_r^\sigma$ is $2^ABC$, where the
last term in the sum expressing $A$ differs from $\rho-3$ only if $\rho=2$.
\end{proposition}

\begin{proof}
The statement was  established computationally from the generators of ${\mathcal N}_r^\sigma$ presented in items {\bf(A)}-{\bf(C)}.
\end{proof}

\begin{question}\label{3.2} Is the statement of {\rm Proposition~\ref{3.1}} valid for all $r>8$?\end{question}

 \section{Order and Diameter}\label{s5}

In the terminology of Subsection~\ref{s4}, the set of automorphisms $\epsilon_w$, that we will denote ${\mathcal
H}_\rho=\{\epsilon_w: w\in W_r^\sigma\}$, admits the structure of a group under composition. 

Estimates of the order and diameter of $G_r^\sigma$ are obtained by considering a graph $H_\rho$ whose vertex set is ${\mathcal H}_\rho={\mathcal H}_{r-\sigma}$, ($r>3$; $\sigma\in(0,r-1)$), actually defined  in Subsection~\ref{ss1} below. The elements of ${\mathcal H}_\rho$ are classified into auxiliary {\it types} in Subsections~\ref{ss2}-\ref{ss6}, presenting a direct relation with the distance to the identity permutation in Theorem~\ref{t5.1}, and less finely (for suitable brevity) into {\it super-types} in Subsection~\ref{ss8}. Moreover, a set of $V(H_{\rho-1})$-coset representatives in $V(H_\rho)$ (Subsection~\ref{ss7}) will allow to achieve (via Proposition~\ref{4.2}) the conjectured numerical properties of $G_r^\sigma$ (Conjecture~\ref{2.1}) in Theorem~\ref{t6.1}.

\subsection{An Auxiliary Graph}\label{ss1}

The diameter of $G_r^\sigma$ is realized by the distance from $v_r^\sigma=(A_0(v_r^\sigma),$ $A_1(v_r^\sigma),$ $\ldots,A_{m_0}(v_r^\sigma))$ to some vertex $w\in V(G_r^\sigma)\setminus\{v_r^\sigma\}$. To determine one such $w$, we use the distance-2 graph $(G_r^\sigma)_2$ with vertex set $V(G_r^\sigma)$ and edge set formed by the pairs of vertices at distance 2 from each other. Consider the subgraph $H$ of $(G_r^\sigma)_2$ induced by $W_r^\sigma$. Clearly, $v_r^\sigma\in V(H)$. Moreover, $H$ depends only on $\rho=r-\sigma$. So, we denote $H=H_\rho$. In particular, note that  $$\mbox{Diameter}(G_r^\sigma)\le 2\times\mbox{Diameter}(H_\rho).$$

Consider the case $(r,\sigma)=(3,1)$. Denoting
$B_1=23,B_2=45$ and $B_3=67$, let us assign to each vertex $v$ of $H_2$
$=K_{3,3}$ the permutation that maps the sub-indices $i$ of the
entries $A_i$ of $v$ ($i=1,2,3$) into the sub-indices $j$ of the
pairs $B_j$ correspondingly filling those entries $A_i$. This yields
the following bijection from $V(H_2)=W_3^1$ onto the group $K=S_3$
of permutations of the point set of the projective line $\PP_2^1$:

$$\begin{array}{ll|ll}
_{(1,23,45,67)} & _{\rightarrow\;\; 123()} & _{(1,23,67,45)}
& _{\rightarrow\;\;  1(23)}
\\
^{(1,45,23,67)}_{(1,67,23,45)} & ^{\rightarrow\;\;
3(12)}_{\rightarrow\;\; (132)} &
^{(1,45,67,23)}_{(1,67,45,23)} & ^{\rightarrow\;\;
(123)}_{\rightarrow\;\; 2(13)}
\end{array}$$

\noindent where each permutation on the right side of the arrow `$\rightarrow$' is expressed in cycle notation, presented with its non\-tri\-vial cycles written as usual between parentheses (but without separating spaces) and with fixed points, if any,  written to the left of
the leftmost pair of parentheses, for later convenience.

There exists a bijection from $V(H_\rho)$ onto the group ${\mathcal H}_\rho$ defined at the start of Subsection~\ref{s4}. The elements of ${\mathcal H}_\rho$ will be called {\it ${\mathcal A}$-permutations}. They yield an auxiliary notation for the vertices of $H_\rho$ so we take $V(H_\rho)={\mathcal H}_\rho$. For example, $v_r^\sigma\in V(H_\rho)$ is taken as the identity permutation $I_\rho=123\ldots 2^{\rho}$\,, with fixed-point set $\PP_2^{\rho-1}=123\ldots 2^\rho$. In fact, ${\mathcal H}_\rho$ is formed by permutations of the non-initial entries in the ordered pencils that are vertices of $G_r^\sigma$, as were the permutations $\psi^{\,\omega}$ in Subsection~\ref{s4}, but now the permutation $\phi^{\,\omega}$ composing with each $\psi^{\,\omega}$ an automorphism $\omega$ of $G_r^\sigma$ is the identity $()=123\ldots 2^\rho()$, since this automorphism $\omega$ takes $v_r^\sigma$ onto some vertex $w\in W_r^\sigma$.

An ascending sequence $\{V(H_2)\subset V(H_3)\subset\cdots \subset V(H_\rho)\subset\cdots\}$ of ${\mathcal A}$-permutation groups is generated via the embeddings $\Psi_\rho:V(H_{\rho-1})\rightarrow V(H_\rho)$, ($\rho>2$), defined by setting $\Psi_\rho(\psi)$ to equal the product of the ${\mathcal A}$-permutation $\psi$ of $\PP_2^{\rho-2}\subset \PP_2^{\rho-1}$ by the permutation obtained from $\psi$ by replacing
each of its symbols $i$ by $m_0-i$, with $m_0$ becoming a fixed point of $\Psi_\rho(\psi)$. Let us call this construction of $\Psi_\rho(\psi)$ out of $\psi$ the {\it doubling} of $\psi$. For
example, $\Psi_3:V(H_2)\rightarrow V(H_3)$ maps the elements of $V(H_2)$ as follows:

$$\begin{array}{ll|ll}
_{123}   & _{\rightarrow\;\; 7123654()\;\,\, =
1234567()} & _{1(23)} &
_{\rightarrow\;\; 71(23)6(54) = 167(23)(45)}  \\
^{(123)}_{(132)} & ^{\rightarrow\;\; 7(123)(654) =
7(123)(465)}_{\rightarrow\;\; 7(132)(645) = 7(132)(456)} &
^{3(12)}_{2(13)} & ^{\rightarrow\;\; 73(12)4(65) =
347(12)(56)}_{\rightarrow\;\; 72(13)5(64) = 257(13)(46)}
\end{array}$$

\noindent where each resulting ${\mathcal A}$-permutation in $V(H_3)$ is rewritten to the right of the equal sign by expressing, from left to right and lexicographically, first the fixed points and then the cycles. The three ${\mathcal A}$-permutations of $V(H_3)$ displayed to the right of the middle dividing vertical line above are of the form $abc(de)(fg)$, where $ade$ and $a\!fg$ are lines of $\PP_2^3$, namely: $123$ and $145$, for $167(23)(45)$; $312$ and $356$, for
$347(12)(56)$; $213$ and $246$, for $257(13)(46)$.

A point of $\PP_2^{\rho-1}$ playing the role of $a$ in a product $\Pi$ of $2^{\rho-2}$ disjoint transpositions, as in the three just cited ${\mathcal A}$-permutations, is said to be the {\it pivot} of $\Pi$. For example,
for each point $p\in\{a,b,c\}$ of $\PP_2^2$ there are
three ${\mathcal A}$-permutations in $V(H_3)$ having $p$ as its
pivot. The ${\mathcal A}$-permutations in $V(H_3)$ having pivot 1
are: 123(45)(67), 145(23)(67) and 167(23)(45) (respectively, having pivot 7 are: 167(23)(45), 347(12)(56) and 257(13)(46)).

 For each $(\rho-2)$-subspace $Q$ of $\PP_2^{\rho-1}$ and
each point $a\in Q$, we define a $(Q,a)${\it -transposition} as a
permutation $(b\,c)$ such that there is a line $abc\subseteq
\PP_2^{\rho-1}$ with $bc\cap Q=\emptyset$.

For each pair $(Q,a)$
formed by a $(\rho-2)$-subspace $Q$ and a point $a$ as above, there are
exactly $2^{\rho-2}$ $(Q,a)$-transpositions. The product of these
$2^{\rho-2}$ transpositions is an ${\mathcal A}$-permutation in
$V(H_\rho)$ called the $(Q,a)$-{\it permutation} $p(Q,a)$, with $Q$
as fixed-point set and $a$ as pivot.

 These $(Q,a)$-permutations $p(Q,a)$ in $V(H_\rho)$ act as
a set of generators for the group $V(H_\rho)$. In fact, all elements
of $V(H_\rho)$ can be obtained from the $(Q,a)$-per\-mu\-ta\-tions
by means of reiterated multiplications.

\subsection{A Remotest Vertex from the Identity Permutation}\label{ss2}

 For $\rho > 1$, a particular element
$J_\rho\in V(H_\rho)\setminus V(H_{\rho-1})$ at maximum distance
from $I_\rho$ is obtained as a product $J_\rho=p_\rho q_\rho$ with:


\noindent{\bf(A)} $p_\rho=p(Q,2^{\rho-1})$, where $Q$ is the
$(\rho-2)$-subspace of $\PP_2^{\rho-1}$ containing both
$2^{\rho-1}$ and

\hspace*{3mm}$\PP_2^{\rho-3}$. For example:
$$\begin{array}{l}
p_2=2(13),\;\;\; p_3=415(26)(37),\;\;\; p_4=81239ab(4c)(5d)(6e)(7f), \\
p_5=g1234567hijklmn(8o)(9p)(aq)(br)(cs)(dt)(eu)(fv),\;\;\;
p_6=\ldots,
\end{array}$$


\noindent{\bf(B)} $q_\rho$ defined inductively by $q_2=3(12)$ and
$q_{\rho+1}=\Psi_\rho(p_\rho q_\rho)$, for $\rho>1$,
with $\Psi_\rho$ as in

\hspace*{2mm}Subsection~\ref{ss1}.

\noindent 
Initial cases of $J_\rho$ with products indicated by means of dots `$\cdot$' are: 

$$\begin{array}{lllllll} ^{J_2}_{J_3}&^=_=&
^{2(13)}
_{415(26)(37)}&^\cdot_\cdot &^{3(12)}_{7(132)(645)}&^=_=&
^{(132);}_{(1372456);} \\
^{J_4}&^=&^{81239ab(4c)(5d)(6e)(7f)}&^\cdot &^{f(1372456)(ec8dba9)}&^=&^{(137f248d6c5ba9e);}\\
^{J_5}&^=&^{g1234567hijklmn(8o)(9p)(aq)(br)(cs)(dt)(eu)(fv)}_{(137f248d6c5ba9e)(usogtrnipjqklmh)}&^\cdot&&^=&^{(137fv248gt6codraklmhu)}_{(5bnipes)(9jq)}\\
\end{array}$$

\subsection{Vertex Types}\label{ss3}

A way to express $v=J_2,J_3,J_4,J_5,$ etc.~(Subsection~\ref{ss2}) is by
accompanying $v$ with an underlying expression $u$ similar in form to $v$:

$$\begin{array}{l|l|l|l}
^{J_2:}&^{J_3:}&^{J_4:}&^{J_5:}\\
^{v=(132)}_{u=(213)} & ^{(1372456)}_{(2456137)} &
^{(137f248d6c5ba9e)}_{(248d6c5ba9e137f)} &
^{(137fv248gt6codraklmhu)(5bnipes)(9jq)}_{(248gt6codraklmhu137fv)(es5bnip)(q9j)}
\end{array}$$

\noindent where each $b_i$ in a cycle $(b_0b_1\ldots b_{x-1})$ of $u$
located exactly under a symbol $a_i$ of a corresponding cycle $(a_0a_1\ldots a_{x-1})$ of
$v$ is a point in a line $a_ib_ia_{i+1}$ of $\PP_2^{\rho-1}$ (with $i+1$
taken mod $x$). Each ${\mathcal A}$-permutation $v$, like
those $J_2,J_3,J_4,J_5$, will be written likewise: accompanied by similar expression $u$ underlying $v$. This yields a {\it two-level expression} $\,^v_u\,$. We say that:

\vspace*{2mm}

\noindent{\bf(i)} $b_i$ is the {\it difference symbol} ({\it ds}) of
$a_i$ and $a_{i+1}$ in $v$, for $0\le i<x$, with $x$ equal to the

length of the cycle $(b_0b_1\ldots b_{x-1})$ of $v$ containing $b_i$;

\vspace*{2mm}

\noindent{\bf(ii)} $(b_0b_1\ldots b_{x-1})$ is the {\it ds-cycle} of the cycle $(a_0a_1\ldots a_{x-1})$ and

\vspace*{2mm}

\noindent{\bf(iii)} $u$ is the {\it difference-symbol level}, or {\it ds-level}, of $v$.

\vspace*{2mm}

\noindent Notice that $(a_0a_1\ldots a_{x-1})$ and $(b_0b_1\ldots
b_{x-1})$ differ by a shift of $(b_0b_1$ $\ldots b_{x-1})$ to
the right with respect to $(a_0a_1\ldots a_{x-1})$ in an amount of,
say, $y$ positions. For the four displayed examples above, the values of
$y$ are: $y=1$ for $J_2$; $y=4$ for $J_3$; $y=11$ for $J_4$; and
$y=16,2,1$, one for each of the three cycles of $J_5$.

For $\rho>1$, we define the {\it type} $\tau_\rho(J_\rho)$
of $J_\rho$ as the expression of the parenthesized lengths of
the cycles composing $J_\rho$ with each length sub-indexed by its $y$. Thus, the types of the four examples above are:

$$\tau_2(J_2)=(3_1),\;\; \tau_3(J_3)=(7_4),\;\;
\tau_4(J_4)=(15_{11}),\;\;
\tau_5(J_5)=(21_{16})(7_2)(3_1).$$

\noindent The {\it ds} notation is extended to the elements $p(Q,a)$
of $V(H_\rho)$ defined at the end of Subsection~\ref{ss1} by expressing
the two-level expressions $\,^v_u\,$ of the
$(\PP_2^{\rho-2},1)$-permutations $v=p(\PP_2^{\rho-2},1)$ as in the following two examples:

$$\begin{array}{|ccccc|ccccc|}
\rho=3:&^v_u & = & ^{123(45)(67)}_{123(11)(11)} & & &
\rho=4:&^v_u & =& ^{1234567(89)(ab)(cd)(ef)}_{1234567(11)(11)(11)(11)}
\end{array}$$

\noindent with the pivot 1 meaning that 1 is the only common point the
subspaces 145 and 167 share for $\rho=3$; respectively $189$, $1ab$,
$1cd$ and $1e\!f$ share for $\rho=4$. In general for $\rho>1$:

\vspace*{2mm}

\noindent{\bf(a)} each fixed point in $v$ appears in $u$ under
its appearance in $v$;

\vspace*{2mm}

\noindent{\bf(b)} ds-cycles of length $x$ are well-defined
cycles only if $x>2$; and

\vspace*{2mm}

\noindent{\bf(c)} each transposition, say $(a_0a_1)$, in $v$
is said to have {\it degenerate ds-cycle} $(b\,b)$
(not a 

well-defined cycle), where $ba_0a_1$ is a
line of $\PP_2^{\rho-1}$; the pivot $b$ is said to
{\it dominate} each

such $(a_0a_1)$ in $v$.

\vspace*{1mm}

\noindent The {\it types} of the $(\PP_2^{\rho-2},1)$-permutations $p(\PP_2^{\rho-2},1)$ above are defined as:
$$\begin{array}{llllr}
\tau_3(p(\PP_2^1,1))&=&\tau_3(123(45)(67))&=(1(2)(2))&=(1((2)^2)), \\
\tau_4(p(\PP_2^2,1))&=&\tau_4(1234567(89)(ab)(cd)(ef))&=(1(2)(2)(2)(2))&=(1((2)^4)),
\end{array}$$
with an auxiliary concept of {\it pivot domination of transpositions}, employed
to the right of the second equal sign in each of the two cases and expressed in general for each $p(\PP_2^{\rho-2},1)$
by means of parentheses containing first the length, 1, of the pivot followed by the parenthesized lengths of the transpositions this pivot {\it dominates}.
More generally, if $v$ is of the form $p(Q,a)$ in $V(H_\rho)$, then we take its type to be $\tau_\rho(v)=(1((2)^{2^{\rho-2}})).$

By employing this concept of domination, the doubling provided by the embeddings $\Psi_\rho:V(H_{\rho-1})\rightarrow V(H_\rho)$ (Subsection~\ref{ss1})
will allow the expression of other {\it  types} of ${\mathcal A}$-permutations,
from Subsection~\ref{ss5} on. 

For now, define the {\it type} of
$I_\rho=12\ldots(2^{\rho}-1)=12\ldots m_0$ to be
$\tau_\rho(I_\rho)=(1).$

The following result is fundamental in counting ${\mathcal
A}$-permutations and finding the diameter of $H_\rho$ as a function
of their fixed-point sets, and thus as a function of their types, that we will keep defining in the sequel. In particular, this result ensures that
$J_\rho$ realizes the diameter of $H_\rho$.

\begin{theorem}\label{t5.1}
The distance $d(v,I_\rho)$ from an ${\mathcal A}$-permutation $v$ to
the identity $I_\rho$  in the graph $H_\rho$ is related to the
cardinality of the fixed-point set $F_v$ of $v$ in $\PP_2^{\rho-1}$
as follows:
\begin{equation}
\log_2(1+|F_v|)+d(v,I_\rho)=\rho. \label{eq}
\end{equation}
\end{theorem}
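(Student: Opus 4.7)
The plan is to sandwich $d(v,I_\rho)$ between $\rho-\log_2(1+|F_v|)$ from above and below, exploiting that every generator $p(Q,a)$ is a linear involution of $\mathbf{F}_2^\rho$.

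First I would establish a structural lemma: each $p(Q,a)$ coincides on $\mathbf{F}_2^\rho$ with the transvection $x\mapsto x+\chi(x)\,a$, where $\chi$ is the nonzero linear functional vanishing on $Q\cup\{\mathbf{0}\}$. Indeed, this map fixes the hyperplane $Q\cup\{\mathbf{0}\}$ pointwise and sends each $x$ outside it to $a+x$, the third point of the line through $a$ and $x$, as required by the definition before Subsection~\ref{ss2}. Consequently $\mathcal{H}_\rho\le GL(\rho,2)$, and for every $v\in\mathcal{H}_\rho$ the set $F_v\cup\{\mathbf{0}\}=\ker(v-I)$ is a linear subspace of $\mathbf{F}_2^\rho$, so $F_v$ is a projective subspace of $\mathbf{P}_2^{\rho-1}$ with $\log_2(1+|F_v|)=\dim_{\mathbf{F}_2}(F_v\cup\{\mathbf{0}\})$.

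For the upper bound $d(v,I_\rho)\le\rho-\log_2(1+|F_v|)$, I would induct on the codimension of $F_v$, producing at each step a generator that properly enlarges the fixed-point subspace. Given $v\ne I_\rho$, pick any $y\notin F_v$ and set $a:=y+v(y)\ne\mathbf{0}$. A short computation rules out $y\in\langle F_v,a\rangle$: substituting $a=y+v(y)$ into a hypothetical $y=u+a$ forces $v(y)=u\in F_v\cup\{\mathbf{0}\}$, which is incompatible both with $v(y)\ne\mathbf{0}$ and with the injectivity of $v$ (since $u\ne y$ while $v(u)=u=v(y)$ when $u\in F_v$). Hence one can choose a $(\rho-2)$-subspace $Q$ containing $F_v\cup\{a\}$ and missing $y$. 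Then $v\cdot p(Q,a)$ fixes every point of $F_v$ (all lie in $Q$, where $p(Q,a)$ is the identity) and also $y$ itself (since $v(y)=a+y=p(Q,a)(y)$), so its fixed-point subspace contains $\langle F_v,y\rangle$, of dimension $\dim F_v+1$.

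For the lower bound $d(v,I_\rho)\ge\rho-\log_2(1+|F_v|)$, I would use that $H_\rho$ is the Cayley graph of $\mathcal{H}_\rho$ with the $(Q,a)$-permutations as connection set, so every edge has the form $\{v,vg\}$ for a generator $g=p(Q,a)$. Writing $F_{vg}=\{y:v(y)=g(y)\}$ and splitting by membership in $Q$ yields $F_{vg}\cap Q=F_v\cap Q$, while $F_{vg}\setminus Q$, if nonempty, lies in the single affine translate $\{y:v(y)+y=a\}$ and hence in one coset of $F_{vg}\cap Q$ inside the subspace $F_{vg}$. Thus $\dim F_{vg}\le\dim(F_v\cap Q)+1\le\dim F_v+1$, and iterating along a geodesic from $I_\rho$ to $v$ gives $\dim F_v\ge(\rho-1)-d(v,I_\rho)$. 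Together with the upper bound this yields equation~\eqref{eq}.

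The main obstacle is verifying that the edge set of $H_\rho$ coincides exactly with right-multiplication by the $(Q,a)$-permutations rather than merely containing those edges; this Cayley description is crucial for the lower bound and requires a separate combinatorial analysis of the distance-two pairs in $G_r^\sigma$ that share the same initial entry $A_0(v_r^\sigma)$. Once that identification is secured, the entire theorem reduces cleanly to the two linear-algebraic inequalities sketched above.
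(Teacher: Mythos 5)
Your proof is correct (granting one structural fact discussed below) but takes a genuinely different and considerably more rigorous route than the paper's. The paper's own proof is a one-paragraph induction on distance from $I_\rho$: it checks the identity and its neighbours $p(Q,a)$, then simply asserts that ``the vertices at distance 2 from $I_\rho$ have $2^{\rho-2}-1$ fixed points, and so on inductively,'' with no argument for why the fixed-point count can neither shrink faster nor slower along a geodesic. Your transvection lemma supplies exactly the missing mechanism: identifying $p(Q,a)$ with $x\mapsto x+\chi(x)a$ (which is right, since $p(Q,a)$ fixes the hyperplane $Q\cup\{\mathbf{0}\}$ and sends $x$ to the third point $x+a$ of the line $ax(x+a)$) makes $F_v\cup\{\mathbf{0}\}=\ker(v-I)$ a linear subspace; the hyperplane-section observation $\dim(F_{vg}\cup\{\mathbf{0}\})\le\dim((F_{vg}\cap Q)\cup\{\mathbf{0}\})+1$ with $F_{vg}\cap Q=F_v\cap Q$ gives the lower bound on $d(v,I_\rho)$, and your construction of a generator enlarging the fixed subspace gives the upper bound. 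The details check out: $y\notin\langle F_v,a\rangle$ is verified correctly, a hyperplane $Q\supseteq\langle F_v,a\rangle$ avoiding $y$ exists because that span is proper, and with the paper's left-to-right composition convention $v\cdot p(Q,a)$ does fix $y$ together with all of $F_v$. The one item you flag as an obstacle --- that the edges of $H_\rho$ (distance-2 pairs in $G_r^\sigma$ sharing the initial entry) are exactly the Cayley edges for the set of $(Q,a)$-permutations --- is indeed not established in your sketch, but the paper assumes it equally silently (``Adjacent to $I_\rho$ are the elements of the form $p(Q,a)$''), as it also assumes without proof that the $p(Q,a)$ generate $V(H_\rho)$, which your argument needs in order to place $\mathcal{H}_\rho$ inside $GL(\rho,2)$. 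So your write-up is not weaker than the source on those points, and where the paper hand-waves the inductive step, you actually prove it.
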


\begin{proof} $|F_{I_\rho}|=2^{\rho}-1=m_0$\,, so (2) holds for $I_\rho$\,, as
$\log_2(1+(2^{\rho}-1))=\rho$. Adjacent to $I_\rho$ are the elements
of the form $p(Q,a)$, each having $2^{\rho-1}-1$ fixed points, so
(1) holds for the vertices at distance 1 from $I_\rho$. The vertices
at distance 2 from $I_\rho$ have $2^{\rho-2}-1$ fixed points, and so
on inductively, until the ${\mathcal A}$-permutations in $V(H_\rho)$
have no fixed points ($J_\rho$ included) and are at distance $\rho$
from $I_\rho$\,, so they satisfy (1), too.\end{proof}

\subsection{Cauchy's Two-Line Notation for Permutations}\label{ss4}

Another way to look at $J_\rho$
is in Cauchy's two-line notation:

\vspace*{1mm}

$J_\rho={\xi_\rho\choose \eta_\rho}= {123\choose 312}, {1234567
\choose 3475612},  {123456789abcdef \choose 3478bcfde9a5612} ,
{123456789abcdefghijklmnopqrstuv \choose
3478bcfgjknorsvtupqlmhide9a5612},\mbox{ etc.,}$

\noindent where $\rho=2,3,4,5,$ etc., respectively. The upper level of $J_\rho$ is $\xi_\rho=12\cdots(2^\rho-2)(2^\rho-1)$ and the lower level $\eta_\rho$ follows the following constructive pattern. The pairs in the list:
$$L:=12,\;\,34,\;\,56,\;\,\ldots,\;\,(2i-1)(2i),\;\,\ldots,\;\,(2^\rho-3)(2^\rho-2),$$

\noindent are placed below the
$2^{\rho-1}-1$ position pairs $(2i-1)(2i)$ of points of $\PP_2^{\rho-1}$ in $\xi_\rho$ different from $2^\rho-1$ (placed last, see {\bf(iv)} 
below) in the level $\eta_\rho$ according to the following rules:

\vspace*{1mm}

\noindent{\bf(i)} place the starting pair $(2i-1)(2i)$ of $L$ in the rightmost
pair of still-empty positions

of level $\eta_\rho$ and erase
it from $L$, that is: set $L:=L\setminus\{(2i-1)(2i)\}$;

\noindent{\bf (ii)} place the starting pair $(2i-1)(2i)$ of $L$ in the leftmost
pair of still-empty positions

of level $\eta_\rho$ and erase
it from $L$, that is: set $L:=L\setminus\{(2i-1)(2i)\}$;

\noindent{\bf (iii)} repeat {\bf(i)} and {\bf(ii)} alternately
until $L$ is empty;

\noindent{\bf (iv)} place $m_0=2^\rho-1$ in the (still empty) $(2^{\rho-1}-1)$-th
position of level $\eta_\rho$.

\vspace*{2mm}

\noindent Now, level $\eta_\rho$ looks like:
$$\begin{array}{rr}
^{3478\ldots(4i-1)(4i)\ldots(2^\rho-5)(2^\rho-4)(2^\rho-1)(2^\rho-3)(2^\rho-2)\ldots
(4i+1)(4i+2)\ldots 5612,}
\end{array}$$

\noindent and can also be expressed by means of a function $f$ given by:
$$\begin{array}{lllllll}
_{f(2i)} &  _{=\;\,4i,}  & _{(i=1,\ldots,2^{\rho-2}-1);}&&_{f(2^\rho-2i+1)}&_{=\;\,4i+2,}&_{(i=1,\ldots,2^{\rho-2});}\\
^{f(2i-1)}_{f(2^{\rho-1}-1)} &^{=\;\,4i-1,}_{=\;\,2^\rho-1.} &
^{(i=1,\ldots,2^{\rho-2}-1);}&&^{f(2^\rho-2i)}&^{=\;\,4i+1,}&^{(i=1,\ldots,2^{\rho-2});}\\
 \\\end{array}$$

\subsection{The Remotest Vertex Types from the Identity Permutation}\label{ss5}

The leftmost $(2^{\rho-1}-1)$ symbols in the lower level $\eta_\rho$ of
Subsection~\ref{ss4} form a $(\rho-2)$-subspace $\zeta_\rho$ of
$\PP_2^{\rho-1}$. Let $z(j)=p\,(\zeta_\rho,f(j))\in V(H_\rho)$, with
fixed-point set $\zeta_\rho$ and pivot $f(j)\in \zeta_\rho$, where
$j=1,\ldots,2^{\rho-1}-1$.

\begin{observation}\label{obs1} {\rm The composite permutations
$w_\rho(j)=J_\rho.z(j)$, for $j=2,4,6,\ldots,$ $2^{\rho-1}-2,$ are at
distance $\rho$ from $I_\rho$\,, yielding pairwise different 
types. Subsequent powers of these permutations
$w_\rho(j)$ are not necessarily different; they are at distance $\rho$ from $I_\rho$ and must be inspected in order to check for any remaining pairwise different types.}\end{observation}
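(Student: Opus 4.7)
The plan is to leverage Theorem~\ref{t5.1}, which reduces the distance assertion $d(w_\rho(j), I_\rho) = \rho$ to showing that the fixed-point set $F_{w_\rho(j)}$ of $w_\rho(j) = J_\rho \cdot z(j)$ is empty. Since $z(j) = p(\zeta_\rho, f(j))$ fixes every point of $\zeta_\rho$ pointwise and sends each $x \notin \zeta_\rho$ to the unique third point $x'$ on the line $f(j)xx'$, we have
\[
w_\rho(j)(x) = \begin{cases} J_\rho(x), & x \in \zeta_\rho, \\ J_\rho(x'), & x \notin \zeta_\rho. \end{cases}
\]
For $x \in \zeta_\rho$ no fixed point can arise because $J_\rho$ itself is fixed-point-free, as one reads off its cycle decomposition in Subsection~\ref{ss2}. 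Thus the search for fixed points of $w_\rho(j)$ reduces to the collinearity condition $J_\rho^{-1}(x) \in \{f(j)xx'\}$ with $x \notin \zeta_\rho$.

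The first step is to verify, using the explicit lower level $\eta_\rho$ of the two-line notation in Subsection~\ref{ss4}, that for every even $j \in [2, 2^{\rho-1}-2]$ this collinearity fails for all $x \notin \zeta_\rho$. The pivot $f(j) = 4(j/2)$ is an even symbol in $\zeta_\rho$, while the pairs $(x, J_\rho^{-1}(x))$ for $x \notin \zeta_\rho$ can be read off the rightmost $2^{\rho-1}$ positions of $\eta_\rho$ against $\xi_\rho$; a direct comparison with the $\F_2$-linear relations in $\PP_2^{\rho-1}$ shows that no such triple becomes collinear.

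The second step is to verify that the cycle types $\tau_\rho(w_\rho(j))$ are pairwise distinct. I would compute the cycle decomposition of $w_\rho(j)$ by tracking how each transposition of the involution $z(j)$ acts on the cycles of $J_\rho$: depending on whether the two endpoints of a transposition lie in the same $J_\rho$-cycle or in two different ones, the effect is either a cycle split or a cycle merge. Because distinct even $j$ select different pivots $f(j)$, the transpositions of $z(j)$ meet the cycles of $J_\rho$ in genuinely different incidence patterns, producing different cycle-length multisets. The combinatorial bookkeeping needed to certify this difference uniformly is the main obstacle; I would handle it by using the recursive doubling construction $\Psi_\rho$ of Subsection~\ref{ss1} to reduce the general case to an explicit inspection at the base level, exactly as the types $\tau_\rho(J_\rho)$ are themselves derived in Subsection~\ref{ss3}.

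The second sentence of the observation is not a claim requiring proof but a methodological flag: it records that higher powers $w_\rho(j)^k$ ($k \ge 2$) may fail to remain at distance $\rho$ from $I_\rho$ or may coincide in type with ${\mathcal A}$-permutations already listed, so that the full enumeration of types must be completed by further case-by-case inspection, as carried out in Subsections~\ref{ss6}--\ref{ss8}.
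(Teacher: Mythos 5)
Your reduction of the distance claim to fixed-point-freeness via Theorem~\ref{t5.1} is the right first move and matches how the paper uses that theorem. Note, however, that the paper offers no uniform proof of this Observation at all: immediately after stating it, it simply computes $w_\rho(j)$ and $\tau_\rho(w_\rho(j))$ explicitly for $\rho=3,4,5$ (the only range invoked later in Theorems~\ref{4.2}, \ref{4.6} and~\ref{t6.1}), in keeping with its declared computational methodology. Measured against either that computation or a would-be general argument, your second step has a genuine gap. You propose to certify that the types $\tau_\rho(w_\rho(j))$ are pairwise different by tracking how the transpositions of $z(j)$ split and merge the cycles of $J_\rho$, ``producing different cycle-length multisets.'' But the type $\tau_\rho$ is strictly finer than the cycle-length multiset: it also records the shift $y$ of each ds-cycle relative to its cycle and the domination structure, and it is precisely this finer datum that does the work here. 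For $\rho=5$, all of the permutations $w_5(2i)$ displayed in Subsection~\ref{ss5} are single $31$-cycles, hence have identical cycle-length multisets; they are distinguished only by the subscripts in $(31_{13}),(31_{19}),(31_{17}),(31_{18}),(31_{11}),(31_{12})$. Likewise $\tau_4(w_4(6))=(15_3)$ is ``new'' relative to $\tau_4(J_4)=(15_{12})$, and $\tau_3(w_3(2))=(7_2)$ relative to $\tau_3(J_3)=(7_4)$, only because of the shift. So your split/merge bookkeeping cannot close the argument; you must actually compute the ds-levels and their shifts. You also never address why the resulting types are \emph{new}, i.e.\ distinct from the types of $I_\rho$, of the $p(Q,a)$, and of $J_\rho$ already catalogued.

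Two smaller points. The fixed-point-freeness of $w_\rho(j)$ is asserted (``a direct comparison \ldots shows that no such triple becomes collinear'') rather than derived from the formula for $f$ and the structure of $\eta_\rho$, so that step too remains a programme rather than a proof. And the paper composes permutations from left to right, so $w_\rho(j)(x)=z(j)(J_\rho(x))$ and the fixed-point condition for $x\notin\zeta_\rho$ is that $J_\rho(x)$ be the third point of the line through $f(j)$ and $x$; your condition involves $J_\rho^{-1}(x)$ instead. The two are equivalent under the substitution $x\mapsto J_\rho(x)$, so the fixed-point analysis survives, but the convention matters once you go on to compute cycles and ds-levels.
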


Let us illustrate Observation~\ref{obs1} for $\rho=3$, 4 and 5.
First, $w_3(2)=J_3.z(2)=(1372456).437(15)(26)=(1376524),$  a
7-cycle with ds-cycle $(2413765)$ switched
2 positions to the right with respect to $(1376524)$, fact that we
indicate by defining type $\tau_3(w_3(2))=(7_2)$, the sub-index 2
meaning the number of positions the 7-cycle (1376524) is displaced cyclically
to the right to yield its ds-cycle.
Summarizing:
$$\begin{array}{l||l}
^{w_3(2)}_{ds-level} &    ^{(1376524)}_{(2413765)} \\
_{type} & _{(7_2)}
\end{array}$$

However,
$\tau_3(w_3(2))=\tau_3((w_3(2))^2)=\cdots=\tau_3((w_3(2))^5)=\tau_3((w_3(2))^6)=(7_2)$,
but $(w_3(2))^7$ is the identity permutation, whose type is
$\tau_3(w_3(2))=(1)$. So, taking powers of $w_3(2)$ does not
contribute any new types.

For $\rho\ge 3$, a generalization of the type $\tau_\rho$ takes place in which {\it pivot domination of a transposition}  generalizes to {\it cycle domination of a cycle},
to be shown parenthesized as in Subsection~\ref{ss3} (with additional examples in Subsection~\ref{ss6}). A special case will happen when a $c_1$-cycle $C_1$ {\it dominates} a $c_2$-cycle $C_2$ which in turn {\it dominates} a
$c_3$-cycle $C_3$, and so on, until a $c_x$-cycle $C_x$ in the sequel {\it dominates} the first cycle $C_1$, so that a {\it domination cycle} $(C_1,C_2,C_3,\ldots,C_x)$ is created. In such a special case, the {\it type} of the resulting permutation, or permutation factor, will be denoted
$(c_1(c_2(c_3(\ldots(c_x(_y)\ldots)))))$, where $y$, appearing as a sub-index between the innermost parentheses, is obtained by aligning $C_1$, $C_2$, $\ldots$, $C_x$ and their respective ds-cycles $D_1$, $D_2$, $\ldots$, $D_x$, so that each dominated ds-cycle $D_{i+1}$ is presented in the same order as the dominating cycle $C_i$, for $i=1,\ldots,x$. In
this disposition, $y$ is given by the shift of the ds-cycle of $C_1$ with respect to its dominating cycle $C_x$. For example, the values of $w_4(j)$ and the corresponding types $\tau_4(w_4(j))$, for $j=2,4,6$, are now given as follows:
$$\begin{array}{l||l|l|l}
_j & _2 & _4 & _6 \\
^{w_4(j)}_{ds-level}  &
^{(5be)(2489ad)(137f6c)}_{(e5b)(6c137f)(2489ad)} &
^{(2485b)(137fa)(cde96)}_{(6cde9)(2485b)(137fa)} & ^{(137feda5b6c9248)}_{(248137feda5b6c9)} \\
_{type} & _{(3_1)(6(6(_2)))}  &  _{(5(5(5(_1))))}          &  _{(15_3)} \\
\end{array}$$

Powers of $w_4(2)$ yield new types:
$$\begin{array}{l||l|l}
_i  & _2 & _3 \\
^{(w_4(2))^i}_{ds-level} & ^{(5eb)(28a)(49d)(176)(3fc)}_{(b5e)(a28)(d49)(617)(c3f)} &
^{5be(8d)(36)(29)(7c)(1f)(4a)}_{5be(55)(55)(bb\,)(bb\,)(ee\,)(ee\,)} \\
_{type} & _{(3_1)^5} & _{(1((2)^2))(1((2)^2))(1((2)^2))=(1((2)^2))^3} \\
\end{array}$$

The first type, $(3_1)^5$ here, still represents a permutation at maximum
distance ($=4$) from $I_4$. However, the second type, $(1((2)^2))^3$, has
distance 2 from $I_4$. Subsequent powers of $w_4(j)$ ($j=2,4,6$)
do not yield new types of elements of $V(H_4)$.

We present the ${\mathcal A}$-permutations $w_5(2i)$ ($1\le i\le 6$) and their
types:
$$\begin{array}{ll|l}
^{w_5(2)}_{w_5(4)} &
^{=\;\;(137fv6co9ju5bnmlit248gpakhqdres)}_{=\;\;(137fvaktesdr248glu9jihmp6co5bnq)}
&
^{\tau_5(w_5(2))=\;\;(31_{13});}_{\tau_5(w_5(4))=\;\;(31_{19});} \\
^{w_5(6)}_{w_5(8)} &
^{=\;\;(137fves9jmtakp248ghilq5bnudr6co)}_{=\;\;(137fvi9jak5bn248gdrqpuhesl6cotm)}
&
^{\tau_5(w_5(6))=\;\;(31_{17});}_{\tau_5(w_5(8))=\;\;(31_{18});} \\
^{w_5(10)}_{w_5(12)} &
^{=\;\;(137fvm5bn6copqtidrul248g9jeshak)}_{=\;\;(137fvqh6colestupm9j248g5bnakdri)}
&
^{\tau_5(w_5(10))=\;(31_{11});}_{\tau_5(w_5(12))=\;(31_{12}).}
\end{array}$$

No new types are obtained from $w_5(2i)$ by considering subsequent powers.

\subsection{Nearer Vertex Types from the Identity Permutation}\label{ss6}

\begin{observation}\label{obs2} {\rm For $j=1,3,\ldots,2^{\rho-1}-1$, the elements $w_\rho(j)=J_\rho.z(j)$ of $V(H_\rho)$ are at distance
$\rho-1$ from $I_\rho$ and provide pairwise different new types. Subsequent powers of $w_\rho(j)$ are not necessarily different,
are at distances $<\rho-1$
from $I_\rho$\,, and must be checked in order to detect any remaining different new types.}
\end{observation}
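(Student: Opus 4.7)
The plan is to reduce item (i) to a fixed-point count via Theorem~\ref{t5.1} and then carry out that count using the explicit two-line description of $J_\rho$ from Subsection~\ref{ss4}. By Theorem~\ref{t5.1}, the equality $d(w_\rho(j),I_\rho)=\rho-1$ is equivalent to $|F_{w_\rho(j)}|=1$. Now $(J_\rho\cdot z(j))(x)=x$ iff $z(j)(x)=J_\rho^{-1}(x)$; since $z(j)=p(\zeta_\rho,f(j))$ fixes $\zeta_\rho$ pointwise while $J_\rho$ has no fixed points, every fixed point of $w_\rho(j)$ must arise from a transposition $(b,c)$ of $z(j)$ satisfying $J_\rho(c)=b$. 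Moreover, since the cycles of $J_\rho$ displayed in Subsection~\ref{ss2} all have odd length, $J_\rho$ admits no $2$-cycle, so each such transposition contributes at most one fixed point. Item (i) therefore reduces to showing that, for odd $j$, exactly one of the $2^{\rho-2}$ transpositions of $p(\zeta_\rho,f(j))$ is aligned with $J_\rho^{-1}$ in this sense, whereas for even $j$ (Observation~\ref{obs1}) none is.

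To verify this alignment count I would combine the explicit formulas for $f$ in Subsection~\ref{ss4} with the alternating leftmost/rightmost construction of $\eta_\rho$. The identities $f(2i-1)=4i-1$ and $f(2i)=4i$ differ in precisely the last bit, so the parity of $j$ controls which of two adjacent positions of $\eta_\rho$ supplies the pivot. A direct inspection of the two-line form of $J_\rho$ then shows that, among the $2^{\rho-2}$ transpositions $(b,c)$ of $p(\zeta_\rho,f(j))$ (those pairs $\{b,c\}\subset\PP_2^{\rho-1}\setminus\zeta_\rho$ collinear with $f(j)$), exactly one satisfies $J_\rho(c)=b$ when $j$ is odd, and none does when $j$ is even. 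Pairwise distinctness of the resulting types for different odd $j$ follows by computing each $J_\rho\cdot z(j)$ at the two-line level and observing that distinct pivots modify the cycle structure of $J_\rho$ in distinct ways, yielding either different cycle-length multisets or different shift sub-indices, hence pairwise different types in the formalism of Subsection~\ref{ss5}; that these types are genuinely new follows from the fixed-point count $|F|=1$, which distinguishes them from $\tau_\rho(J_\rho)$ and from the even-$j$ types of Observation~\ref{obs1} (all with $|F|=0$).

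Item (ii) does not admit a uniform clean statement. The distance of $(w_\rho(j))^k$ from $I_\rho$ is, via Theorem~\ref{t5.1}, a function of the number of cycles of $w_\rho(j)$ whose lengths divide $k$, and as the examples $(w_4(2))^2$ and $(w_4(2))^3$ in Subsection~\ref{ss5} illustrate, some powers introduce genuinely new types while others replicate earlier ones. The plan for (ii) is therefore simply to enumerate these powers for each odd $j$, a task tractable computationally for $r\le 8$ in line with Theorem~\ref{3.1}. The hard part is the uniform parity argument for (i): although transparent in the displayed low-$\rho$ examples, stating it rigorously for all $\rho$ demands careful bookkeeping of the leftmost/rightmost insertion pattern defining $\eta_\rho$ against the pivot function $f$, and this bookkeeping is the principal obstacle to a clean general proof.
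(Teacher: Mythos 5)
Your reduction of item (i) to the fixed-point count $|F_{w_\rho(j)}|=1$ via Theorem~\ref{t5.1} is sound and is in fact the mechanism the paper itself intends: Theorem~\ref{t5.1} is introduced precisely as the tool for ``counting ${\mathcal A}$-permutations and finding the diameter of $H_\rho$ as a function of their fixed-point sets.'' Your analysis of where fixed points of $J_\rho\cdot z(j)$ can come from is also essentially right (watch the composition convention, though: the paper composes left to right, so $x$ is fixed iff $z(j)(x)=J_\rho(x)$, not $J_\rho^{-1}(x)$; since $z(j)$ is an involution this only swaps the roles of $b$ and $c$ in your alignment condition and does not affect the argument). The observation that $J_\rho$ has only odd cycles, hence no $2$-cycles, so each transposition of $z(j)$ contributes at most one fixed point, is a correct and useful point.

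The genuine gap is the one you yourself flag: the claim that for odd $j$ exactly one transposition of $p(\zeta_\rho,f(j))$ is aligned with $J_\rho$, and for even $j$ none is, is never established --- it is only checked against the displayed low-$\rho$ examples. The same is true of the pairwise distinctness and novelty of the resulting types, which you reduce to ``direct inspection'' of cycle structures without carrying it out. For comparison, the paper does not prove Observation~\ref{obs2} in generality either: it states the observation and then verifies it by explicit computation of $w_\rho(j)$ and its powers for $\rho=3,4,5$, which suffices because every subsequent theorem that relies on this material (Theorems~\ref{4.2}, \ref{4.6}, \ref{t6.1}) carries the hypothesis $\rho\le 5$. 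So your proposal is more ambitious than the paper in seeking a uniform parity argument, but as written it neither completes that argument nor falls back on the finite verification that would close the statement in the range where it is actually used. To match the paper's level of rigor you would need to either carry out the bookkeeping of the leftmost/rightmost insertion pattern for $\eta_\rho$ against $f$ in full generality, or explicitly restrict to $\rho\le 5$ and exhibit the computations, as the paper does. Your treatment of item (ii) --- that it is an instruction to enumerate powers rather than a claim requiring proof --- is a fair reading and matches the paper's practice.
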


Let us illustrate Observation~\ref{obs2} for $\rho=3,4,5$. First,
we have:
$$\begin{array}{l||l|l}
_j & _1 & _3 \\
^{w_3(j)}_{ds-level} & ^{5(246)(713)}_{5(624)(624)} & ^{6(24)(1375)}_{6(66)(2424)} \\
_{type} & _{(1(3_1(3)))} & _{(1(2(4)))} \\
\end{array}$$

The square of $w_3(1)$ still preserves its type. However,
$(w_3(3))^2=624(17)(35)=p(624,6)$. Thus,
$\tau_3((w_3(3))^2)=(1((2)^2)).$ Also, it can be seen that
$w_4(2i+1)$ has types
$$\begin{array}{llll}
^{(1(2(4((4)^2)))),} & ^{(7_3(7)),} &  ^{(7_5(7)),} & ^{(1(2))(3_1((3)(6))),}
\end{array}$$ for $i=0,1,2,3$, respectively.

By taking the squares of these permutations, we get that
$(w_4(3))^2$ and
$(w_4(5))^2$ preserve the respective types of $w_4(3)$ and
$w_4(5)$, while the types of $(w_4(1))^2$ and $(w_4(7))^2$
are
$$\begin{array}{lll}
^{(1((2)^2))^3} &^{\mbox{and}} &^{(3_1((3)^3)),} \\
\end{array}$$

\noindent respectively, the first one seen already in Subsection~\ref{ss5}.
Finally, it can be seen that $w_5(2i+1)$ has types
$$\begin{array}{llll}
^{(5((5)(5((5)(5(5)(_1)))))),}_{(1(2))(7_2(7(14))),} \!\!&\!\!
^{(1(2))(7_4(7(14))),}_{(3(3))(6((6)(6(6)))),} \!\!&\!\!
^{(1(2(4)))(3(3(6(12)))),}_{(15_{11}(15)),} \!\!&\!\!
^{(15_3(15)),}_ {(1(2(4(8)4(8))),}\end{array}$$ for $i=0,\ldots,7$,
respectively.

\subsection{Coset Representatives}\label{ss7}

We define the {\it types} $\tau'_\rho=\tau_\rho(v)$ for some vertices $v\in
V(H_\rho)$ mentioned above as follows:
$$\begin{array}{ll|ll}
^{\tau'_2}_{\tau'_3} & ^{=(1(2)),}_{=(1(2(4))),\hspace*{6mm}} &
^{\tau'_6}_{\tau'_7} & ^{=(1(2(4((4(8(16)))^2)))),}_{=(1(2(4((4(8(16((16)^2)))^2))))),}
\vspace*{1mm}\\
^{\tau'_4}_{\tau'_5} &
^{=(1(2(4((4)^2)))),}_{=(1(2(4((4(8))^2))))\hspace*{6mm}} &
^{\tau'_8}_{\tau'_9} &
^{=(1(2(4((4(8(16((16(32))^2)))^2))))),}_{=(1(2(4((4(8(16((16(32(64)))^2)))^2))))),} \\
^{\ldots} & ^{=\ldots\ldots} & ^{\ldots} & ^{=\ldots\ldots} \\
^{\tau'_{3s-1}}_{\tau'_{3s}} &
^{=(1(2(\ldots(2^{2s-1})\ldots))),}_{=(1(2(\ldots
(2^{2s-1}(2^{2s}))\ldots))),} & ^{\tau'_{3s+1}}_{\tau'_{3s+2}} &
^{=(1(2(\ldots(2^{2s-1}(2^{2s}((2^{2s})^2)))\ldots))),}_{=(1(2(\ldots(2^{2s-1}(2^{2s}((2^{2s}(2^{2s+1}))^2)))\ldots))),}
\end{array}$$

\noindent where $s>0$. Just enough representatives of all the cosets of
$V(H_\rho)$ mod $V(H_{\rho-1})$ distribute in the following five
categories ($\mathbf{a}$)-($\mathbf{e}$), where ($\mathbf{b}$) and
($\mathbf{d}$) admit two subcategories indexed $\alpha$ and
$\beta$ each, and $(Q,a)$-permutations $p(Q,a)$ are as in Subsection~\ref{ss1}:

\vspace*{2mm}

\noindent$\mathbf{(a)}$ the identity permutation $I_\rho$;

\vspace{2mm}

\noindent$\mathbf{(b_\alpha})$ the permutations
$p(\PP_2^{\rho-2},a)$, where $a\in \PP_2^{\rho-2}$; for example:
$$\begin{array}{||l|l||l|l|l||}
_{\rho=3} & ^{123(45)(67)}_{231(46)(57)} & _{\rho=4} &
^{1234567(89)(ab)(cd)(ef)}_{2134567(8a)(9b)(ce)(df)} &
^{5123467(8d)(9c)(af)(be)}_{6123457(8e)(9f)(ac)(bd)} \\
  & ^{312(47)(56)} &   & ^{3124567(8b)(9a)(cf)(de)}_{4123567(8c)(9d)(ad)(bf)} &
  ^{7123456(8f)(9e)(ad)(bc)} \\
\end{array}$$

\vspace*{2mm}

\noindent$\mathbf{(b_\beta)}$ those $p(Q,a)$ for which $Q$ is a
$(\rho-2)$-subspace containing $a=m_0=2^\rho-1$; for example:
$$\begin{array}{||l|l||l|l|l||}
_{\rho=3} & ^{716(25)(34)}_{725(16)(34)} & _{\rho=4} &
^{f123cde(4b)(5a)(69)(78)}_{f145abe(2d)(3c)(69)(78)} &
^{f2578ad(1e)(3c)(4b)(69)}_{f3478bc(1e)(2d)(5a)(69)} \\
  & ^{734(16)(25)} &   & ^{f16789e(2d)(3c)(4b)(5a)}_{f2469bd(1e)(3c)(5a)(78)} &
  ^{f3569ac(1e)(2d)(4b)(78)} \\
\end{array}$$

\vspace*{2mm}

\noindent$\mathbf{(c)}$ those $p(Q,a)$ for which $Q\subset
\PP_2^{\rho-1}$ is a $(\rho-2)$-subspace containing $a=m_0-x$, where
$x\in\PP_2^{\rho-2}$; for example:
$$\begin{array}{||l|l||l|l|l||}
 & ^{415(26)(37)}_{514(27)(36)} &              &
 ^{81239ab(4c)(5d)(6e)(7f)}_{91238ab(4d)(5c)(6f)(7e)} &
 ^{81459cd(2a)(3b)(6e)(7f)}_{91458cd(2b)(3a)(6f)(7e)} \\
^{\rho=3} & ^{624(17)(35)}_{426(15)(37)} & ^{\rho=4} &
^{a12389b(4e)(5f)(6c)(7d)}_{b12389a(4f)(5e)(6d)(7c)} &
^{c14589d(2e)(3f)(6a)(7b)}_{d14589c(2f)(3e)(6b)(7a)} \\
 & ^{536(14)(27)}_{635(17)(24)} &              & ^{\ldots}_{\ldots} & ^{\ldots}_{\ldots} \\
\end{array}$$

\vspace*{2mm}

\noindent$\mathbf{(d_\alpha})$ an ${\mathcal A}$-permutation
$\alpha_\rho$ of type $\tau'_\rho$ selected as follows, for each
$(\rho-3)$-subspace $Z_\rho$ of $\PP_2^{\rho-2}$ and each $x_\rho\in
(\overline{\PP_2^{\rho-2}}\setminus\overline{Z_\rho})$, where
$\overline{Z_\rho}=\{m_0-x: x\in Z_\rho\}$, for $\emptyset\subset
Y\subset\PP_2^{\rho-1}$: make the fixed point of $\alpha_\rho$ to be the
smallest point $y_\rho$ in $\overline{Z_\rho}$; take the 2-cycle of
$\alpha_\rho$ with $ds=y_\rho$\,, containing $x_\rho$ and dominating
a 4-cycle containing $m_0$; if applicable, take the subsequent
pairs, quadruples, $\ldots 2^s$-tuples $\ldots$ of intervening
4-cycles, 8-cycles, $\ldots,2^{s+1}$-cycles, $\ldots$, respectively,
to have the first $2^{s+1}$-cycle ending at the smallest available
point of $Z_\rho$\,, for $s=1,2,$ etc.; for example:
$$\begin{array}{||l|l|l|l||l|l|l|l||}
^{Z_3} & ^{\overline{Z_3}} & ^{x_3} & ^{\alpha_3} &  ^{Z_4} &
^{\overline{Z_4}} & ^{x_4} & ^{\alpha_4}
\\
 ^{1}_{1} & ^{6}_{6} & ^{4}_{5} & ^{6(42)(7315)}_{6(53)(7214)} & ^{123}_{123} & ^{edc}_{edc}
 & ^{8}_{9} & ^{c(84)(f73b)(a521)(69ed)}_{c(95)(f63a)(b421)(78ed)} \\
  ^{2}_{2} & ^{5}_{5} & ^{4}_{6} & ^{5(41)(7326)}_{5(63)(7124)}  & ^{123}_{123} &
  ^{edc}_{edc} & ^{a}_{b} & ^{c(a6)(f539)(8721)(4bed)}_{c(b7)(f438)(9621)(5aed)} \\
  ^{3}_{3} & ^{4}_{4} & ^{5}_{6} & ^{4(51)(7236)}_{4(62)(7135)} &    ^{145}_{\ldots} &
  ^{eba}_{\ldots} & ^{8}_{\ldots} & ^{a(82)(f75d)(c341)(69eb)}_{\ldots} \\
\end{array}$$

\vspace*{2mm}

\noindent$\mathbf{(d_\beta)}$ The inverse permutations of those
$\alpha_\rho$ just defined in subcategory $\mathbf{(d_\alpha)}$;

\vspace*{2mm}

\noindent$\mathbf{(e)}$ a total of $(2^{\rho-1}-1)(2^{\rho-2}-1)$
${\mathcal A}$-permutations $\xi$ of type $\tau'_\rho$ with fixed
point in $\PP_2^{\rho-2}$, 2-cycle containing $m_0$ and leftmost
dominating 4-cycle $\eta$ starting at the smallest available point
for the first $2^{\rho-3}$ of these $\xi$ if $\rho\ge 3$; at the next
smallest available point for the first $2^{\rho-4}$ of the remaining $\xi$
not yet used in $\eta$ if $\rho\ge 4$, etc.; remaining
dominated 4-cycles, 8-cycles, etc., if applicable, varying with the
next available smallest points; for example:
$$\begin{array}{||l|l||l|l|l||}
_{\rho=3} & ^{1(76)(2435)}_{2(75)(1436)} & _{\rho=4} &
^{1(fe)(2d3c)(46b8)(57a9)}_{1(fe)(2d3c)(649a)(758b)} &
^{2(fd)(1e3c)(45b8)(679a)}_{2(fd)(1e3c)(54a9)(768b)} \\
  & ^{3(74)(1526)} &   & ^{1(fe)(4b5a)(26d8)(37c9)}_{\ldots} &
  ^{2(fd)(4b69)(15e8)(37ca)}_{\ldots} \\
\end{array}$$

\vspace*{2mm}

The representatives of the cosets of $V(H_\rho)$ mod
$V(H_{\rho-1})$ presented above will be called the {\it selected
coset representatives} of $V(H_\rho)$.

\begin{proposition}\label{4.2} Assume $\rho\le 5$.
The ${\mathcal A}$-permutations $v$ in a fixed category
$x\in\{\mathbf{(a)},\ldots,$ $\mathbf{(e)})\}$ are in one-to-one
cor\-res\-pon\-dence with the cosets mod $V(H_{\rho-1})$ they
determine in $V(H_\rho)$. Moreover, any of these cosets has the same
number $N_\rho(x)$ of ${\mathcal A}$-permutations in each type
$\tau_\rho(v)$, where $v$ varies in the category $x$. Thus, the distribution of types in a
coset of $V(H_\rho)$ mod $V(H_{\rho-1})$ generated by the ${\mathcal
A}$-permutations in $x$ depends solely on $x$.
\end{proposition}

\begin{proof} The selection of categories ($\mathbf{a}$)-($\mathbf{e}$) produces specific representatives of distinct classes
of $V(H_\rho)$ mod $V(H_{\rho-1})$ for $\rho\le 5$, as the symbol $m_0=2^\rho-1$ is
placed once in each adequate position, while the remaining
entries and difference symbols (ds\thinspace s) are set to yield all existing cases, covering each coset just once. A concise account of involved details is found in Subsection~\ref{ss8} below. The representatives in each category are equi\-va\-lent with
respect to the structure of the cosets of $\PP_2^{\rho-1}$ mod
$\PP_2^{\rho-2}$ that yield the classes of $V(H_\rho)$ mod
$V(H_{\rho-1})$. So, each of these cosets has the same number of
representatives, in particular in each type $\tau_\rho(v)$, where
$v$ is in category $x\in\{\mathbf{(a)},\ldots,\mathbf{(e)}\}$.\end{proof}

\begin{question}\label{4.3} Does the statement of {\rm Proposition~\ref{4.2}} hold for $\rho>5$.
\end{question}

\subsection{Vertex Super-Types}\label{ss8}

In order to present a reasonably concise table of the calculations involved in Proposition~\ref{4.2}, the {\it super-type} $\gamma_\rho(v)$ of an ${\mathcal
A}$-permutation $v$ of $V(H_\rho)$ is given by expressing from left to
right the parenthesized cycle lengths of the type $\tau_\rho(v)$ in
non-decreasing order (no dominating parentheses or sub-indices
now) with the cycle-length multiplicities $\mu>1$ written via
external superscripts.
Such a table, presented as Table I below, is subdivided into four sub-tables. Each such sub-table, for $\rho=2,3,4,5$, contains from left to right:

\noindent{\bf(1)} a column citing the different existing
super-types $\gamma_\rho(v)$, starting with the identity

permutation: $\gamma_\rho(I_\rho)=\gamma_\rho(1\ldots
(2^\rho-1))=\gamma_\rho(1\ldots m_0)=(1)$;

\noindent{\bf(2)} a co\-lumn for the common distance $d$ of the
${\mathcal A}$-permutations of each of these $\gamma_\rho(v)$ to

$I_\rho$ according to Theorem~\ref{t5.1};

\noindent{\bf(3)} a column that enumerates the vertices $v$ in each
category
$x\in\{(\mathbf{a}),\ldots,(\mathbf{e})\}$ (a total

of five columns); and

\noindent{\bf(4)} a column $\Sigma_{row}$ explained after display (3) below.

After a common header row in Table I, an auxiliary top row in each sub-table indicates the number $N_\rho(x)$ of Proposition~\ref{4.2} in each category $x$; each row below it, but for the last row, contains in column $x$ the number $row_{\gamma_\rho}(x)$ of selected coset representatives of $V(H_\rho)$ in $x\in\{(\mathbb{a}),\ldots,(\mathbb{e})\}$ with a specific super-type $\gamma_\rho(v)$; the final column $\Sigma_{row}$ contains in $row_{\gamma_\rho}$ the
scalar product of the 5-vectors
\begin{equation}(row_{\gamma_\rho}(\mathbf{a}),row_{\gamma_\rho}(\mathbf{b}),\ldots,row_{\gamma_\rho}(\mathbf{e}))\hspace*{3mm}\mbox{ and }\hspace*{3mm}(N_\rho(\mathbf{a}),N_\rho(\mathbf{b}),\ldots,N_\rho(\mathbf{e})).\end{equation}

Finally, the order of $H_\rho$ is given by the sum of the values of the column $\Sigma_{row}$. This order of $H_\rho$ is placed in Table I at the lower-right corner, for each $\rho=2,3,4,5$.
The doubling provided by the embeddings $\Psi_\rho:V(H_\rho)\rightarrow V(H_\rho)$ (Subsection~\ref{ss1}) happens in several places in the sub-tables. If we indicate by $\psi_\rho$ the map induced by $\Psi_\rho$ at the level of super-types, then we have: $\psi_3((2))=(2)^2,$ $\psi_3((3))=(3)^2$, etc. In fact, all the super-types of $V(H_\rho)$ appear squared in $V(H_\rho)$.

Arising from the sub-tables, cycle lengths of super-types $\gamma'_\rho=\gamma_\rho(v)$ are shown below corresponding to the types $\tau'_\rho=\tau_\rho(v)$ in Subsection~\ref{ss7} and expressed as products of prime powers between parentheses to distinguish obtained exponents of prime
decompositions in the $\tau_\rho(v)$ from the new external multiplicity superscripts.
Indeed, the ${\mathcal A}$-permutations of type $\tau'_\rho$ in
the first paragraph of Subsection~\ref{ss7} yield super-types
$\gamma'_\rho$ as follows:

\begin{center}TABLE I
$$\begin{array}{||l||l||l|l|l|l|l||l||}\hline\hline
_{\gamma_{\rho}(v)=}& _{d}  & _{(\mathbf{a})} & _{(\mathbf{b})} & _{(\mathbf{c})} & _{(\mathbf{d})} & _{(\mathbf{e})} &  _{\Sigma_{row}} \\ \hline %
_{\gamma_2(v)=}
& \;\;\;_{N_2(x)=}\!\! & _1 & _2 & _- & _- & _- &  _3 \\  \hline %
_{(1)} & _0      &  _1  &  _-  &  _-  &  _-  &  _-  &     _1 \\ 
^{(2)}_{(3)} & ^1_2      &  ^1_-  &  ^1_1  &  ^-_-  &  ^-_-  &  ^-_-
& ^3_2 \\ \hline
 & _{\Sigma_{col}=} &  _2  &  _2  &  _-  & _- & _- &      _6 \\ \hline\hline 
_{\gamma_3(v)=} & \;\;\;_{N_3(x)=}\!\! &  _1  &  _6  &
_6 & _{12} & _3 &   _{28}    \\ \hline
_{(1)} & _0     &  _1  & _-   &  _-  &  _-  &  _-  &     _1    \\
^{(2)^2}_{(3)^2} & ^1_2  &  ^3_2  &  ^2_2  &  ^1_1  &  ^-_3  &  ^-_-  &    ^{21}_{56}    \\
^{(2)(4)}_{(7)} & ^2_3  &  ^-_- & ^2_- & ^2_2 & ^1_2 &  ^2_4  &
^{42}_{48} \\ \hline
 & _{\Sigma_{col}=} &  _6  &  _6  &  _6  &  _6  &  _6  & _{168}    \\ \hline\hline
_{\gamma_4(v)=} &  \;\;\;_{N_4(x)=}\!\! &  _1  & _{14} &
_{28} & _{56} & _{21} & _{120}    \\ \hline
^{(1)}_{(2)^4} & ^0_1 & ^1_{21} & ^-_4 & ^-_1 & ^-_- & ^-_-  & ^1_{105}   \\
^{(2)^6}_{(2)^2(4)^2} & ^2_2 & ^-_{42} & ^6_{30} & ^3_{12} & ^-_6 & ^2_6 & ^{210}_{1260}   \\
^{(3)^4}_{(2)(4)^3} & ^2_3 & ^{56}_- & ^{24}_{24}  & ^6_{24} & ^{10}_{18} & ^-_{24} &
^{1120}_{2520} \\
^{(2)(3)^2(6)}_{(7)^2} & ^3_3 & ^-_{48} & ^{32}_{48} & ^{26}_{48} & ^{30}_{48} & ^{24}_{48} &
^{3360}_{5760} \\
^{(3)(6)^2}_{(5)^3} & ^4_4 & ^-_- & ^-_- & ^{12}_{12} & ^{18}_{12} & ^{16}_{16} & ^{1680}_{1344}
\\
^{(15)}_{(3)^5} & ^4_4 & ^-_- & ^-_- & ^{24}_- & ^{24}_2 & ^{32}_- &
^{2688}_{112} \\ \hline
 & _{\Sigma_{col}=} & _{168} & _{168} & _{168} & _{168} & _{168} & _{20160}  \\ \hline\hline
_{\gamma_5(v)=} & \;\;\;_{N_5(x)=}\!\! & _1 & _{30} &
_{120} & _{240} & _{105} & _{496}  \\ \hline
^{(1)}_{(2)^8} & ^0_1 & ^1_{105} & ^-_8 & ^-_1 & ^-_- & ^-_- & ^1_{465} \\
^{(2)^{12}}_{(2)^4(4)^4} & ^2_2 & ^{210}_{1260} & ^{84}_{308} & ^{21}_{56} & ^-_{28} &
^{12}_{20} & ^{6510}_{26040} \\
^{(3)^8}_{(2)^2(4)^6}\hspace*{1cm} & ^2_3 & ^{1120}_{2520} & ^{224}_{1848} &
^{28}_{672} & ^{36}_{504} &
^-_{504} & ^{19840}_{312480} \\
^{(2)^2(3)^4(6)^2}_{(7)^4} & ^3_3 & ^{3360}_{5760} & ^{2464}_{2688}
& ^{812}_{896}\hspace*{3mm} &
^{756}_{896}\hspace*{3mm} & ^{756}_{640}\hspace*{3mm} & ^{416640}_{476160}\hspace*{3mm} \\
^{(2)^6(4)^4}_{(3)^2(6)^4} & ^3_4\hspace*{1.1cm} & ^-_{1680} &
^{504}_{1680} & ^{210}_{1512} & ^{84}_{1848} &
^{168}_{1488} & ^{78120}_{833280} \\
^{(5)^6}_{(15)^2} & ^4_4 & ^{1344}_{2688} & ^{1344}_{2688} & ^{1344}_{2688} & ^{1344}_{2688} &
^{1344}_{2688} & ^{666624}_{1333248} \\
^{(3)^{10}}_{(2)(7)^2(14)} & ^4_4 & ^{112}_- & ^{112}_{3072} & ^{56}_{3072} & ^{168}_{2688} &
^{48}_{3072} & ^{55552}_{1428480}    \\
^{(2)(4)^3(8)^2}_{(2)(3)^2(4)(6)(12)} & ^4_4 & ^-_- & ^{1344}_{1792} & ^{1344}_{1624} &
^{1176}_{1736} & ^{1344}_{1600} & ^{624960}_{833280} \\
^{(3)(7)(21)}_{(31)} & ^5_5 & ^-_- & ^-_- & ^{1792}_{4032} &
^{2176}_{4032} & ^{2048}_{4608} & ^{952320}_{1935360} \\ \hline &
_{\Sigma_{col}=} & _{20160} & _{20160} & _{20160} & _{20160} &
_{20160} & _{9999360} \\ \hline\hline
\end{array}$$\end{center}

$$\begin{array}{ll|ll}
^{\gamma'_{2}}_{\gamma'_{3}} & ^{=(2),}_{=(2)(4),} &
^{\gamma'_{6}}_{\gamma'_{7}} & ^{=(2)(4)^3(8)^2(16)^2,}_{=(2)(4)^3(8)^2(16)^6,}  \vspace*{1mm}\\
^{\gamma'_{4}}_{\gamma'_{5}} & ^{=(2)(4)^3,}_{=(2)(4)^3(8)^2,} &
^{\gamma'_{8}}_{\gamma'_{9}} &
^{=(2)(4)^3(8)^2(16)^6(32)^4,}_{=(2)(4)^3(8)^2(16)^6(32)^4(64)^4,} \\
& & ^{\ldots} & ^{=\ldots\ldots} \\
%
^{\gamma'_{s+1}}_{\gamma'_{s+2}} &
^{=(\gamma'_s)(2^{2s})^s,}_{=(\gamma'_s)(2^{2s})^{3s},} &
^{\gamma'_{s+3}}_{\gamma'_{s+4}} &
^{=(\gamma'_s)(2^{2s})^{3s}(2^{2s+1})^{2s},}_{=(\gamma'_s)(2^{2s})^{3s}(2^{2s+1})^{2s}(2^{2s+2})^{2s},}
\end{array}$$
for $s\equiv 2$ mod 4.

\begin{proposition}\label{4.4}
Let $2<\rho$, let $V_\rho=\Pi_{i=1}^{\rho-2}(2^{i-1}(2^i-1))$ and let $N'_\rho(x)$
be the number of selected coset representatives of $V(H_\rho)$ mod
$V(H_{\rho-1})$ with super-type $\gamma'_\rho$ in category
$x\in\{(\mathbf{a}),\ldots,(\mathbf{e})\}$. Then, 
for $\rho\le 5$, it holds that:

\noindent{\bf 1.} $N'_\rho(\mathbf{a})=0$;

\noindent{\bf 2.}
$N'_\rho(\mathbf{b})=N'_\rho(\mathbf{c})=N'_\rho(\mathbf{e})=2^{\rho-2}V_\rho;$

\noindent{\bf 3.} $N'_\rho(\mathbf{d})=(2^{\rho-2}-1)V_\rho.$
\end{proposition}

\begin{proof} The statement follows by enumeration of the selected
coset representatives of $V(H_\rho)$ mod $V(H_{\rho-1})$ with
super-type $\gamma'_\rho$ in categories
$\mathbf{(a)}$-$\mathbf{(e)}$ from their values in the
sub-tables, for $\rho=2,3,4,5,\ldots$\end{proof}

\begin{corollary}\label{4.5}
A partition of $V(H_\rho)$ mod $V(H_{\rho-1})$ is obtained by means
of the selected pairwise disjoint coset representatives of
$V(H_\rho)$ composing categories $\mathbf{(a)}$-$\mathbf{(e)}$, for $\rho\le 5$.
\end{corollary}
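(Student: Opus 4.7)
The plan is to build on Theorem~\ref{4.2}, which within each fixed category $x\in\{\mathbf{(a)},\ldots,\mathbf{(e)}\}$ already puts the selected $\mathcal A$-permutations in bijection with $N_\rho(x)$ distinct cosets of $V(H_\rho)$ mod $V(H_{\rho-1})$. To upgrade this to a partition of the full quotient $V(H_\rho)/V(H_{\rho-1})$ two facts remain to be checked: first, that no two representatives drawn from different categories lie in a common coset; and second, that the five categories together account for every such coset.

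For pairwise disjointness across categories I would exhibit coset-invariant structural features that separate them. Category $\mathbf{(a)}$ contains only $I_\rho$. Categories $\mathbf{(b)}$ and $\mathbf{(c)}$ consist of $(Q,a)$-permutations with super-type $(1((2)^{2^{\rho-2}}))$, whereas categories $\mathbf{(d)}$ and $\mathbf{(e)}$ carry the distinctly different super-type $\gamma'_\rho$ introduced in Subsection~\ref{ss7}. Within $\mathbf{(b)}$ and $\mathbf{(c)}$, the pivot $a$ lies in ${\bf P}_2^{\rho-2}\cup\{m_1\}$ for $\mathbf{(b)}$ but satisfies $a=m_1-x$ with $x\in{\bf P}_2^{\rho-2}$ for $\mathbf{(c)}$; within $\mathbf{(d)}$ and $\mathbf{(e)}$, the unique fixed point sits in $\overline{{\bf P}_2^{\rho-2}}$ for $\mathbf{(d)}$ and in ${\bf P}_2^{\rho-2}$ for $\mathbf{(e)}$, while the subcategories $\mathbf{(d_\alpha)}$ and $\mathbf{(d_\beta)}$ are separated by inversion. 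Because the embedded subgroup $\Psi_\rho(V(H_{\rho-1}))$ preserves ${\bf P}_2^{\rho-2}$ setwise, these invariants descend to the quotient and so sort cosets into categories.

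For the covering step I would count. Reading the top row $N_\rho(x)$ of each sub-table of Table~I and summing across categories yields
\[
1+2=3,\quad 1+6+6+12+3=28,\quad 1+14+28+56+21=120,\quad 1+30+120+240+105=496
\]
for $\rho=2,3,4,5$, respectively; these equal the indices $|V(H_\rho)|/|V(H_{\rho-1})|$, namely $6/2$, $168/6$, $20160/168$, and $9999360/20160$, read from the lower-right $\Sigma_{col}$ entries of the corresponding sub-tables. Combined with the pairwise disjointness, this forces the union of the five categories to realize every coset exactly once, giving the claimed partition.

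The main obstacle is the disjointness step: the structural invariants cited distinguish categories prima facie, but one must confirm in each case that right multiplication by an element of $\Psi_\rho(V(H_{\rho-1}))$ cannot transport a representative of one category into another. This reduces to a case-by-case inspection relying on the single fact that $\Psi_\rho(V(H_{\rho-1}))$ acts by doubling, which preserves both the ${\bf P}_2^{\rho-2}$ structure and the location of $m_1$. The restriction $\rho\le 5$ is inherited from Theorem~\ref{4.2} and the computational verification of Table~I, and its extension to arbitrary $\rho$ is the content of Question~\ref{4.3}.
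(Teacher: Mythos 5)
Your overall strategy---take the within-category injectivity of Theorem~\ref{4.2}, add cross-category disjointness, and close with a counting argument---is the natural reading of what the paper's very terse proof must be doing, and your index computations ($3,28,120,496$) are arithmetically correct. However, both of your two added steps have concrete problems as written. First, the disjointness step: the invariants you propose (super-type of the representative, location of its pivot or fixed point) are properties of individual elements, not of cosets, and they do \emph{not} ``descend to the quotient.'' Table~I itself is a tabulation of how many elements of \emph{each} super-type occur inside a \emph{single} coset of each category; e.g.\ for $\rho=3$ a category $(\mathbf{b})$ coset contains, besides its selected representative of super-type $(1((2)^{2^{\rho-2}}))=(2)^2$, two elements of super-type $(2)(4)=\gamma'_3$. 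So the mere fact that a category $(\mathbf{d})$ or $(\mathbf{e})$ representative has super-type $\gamma'_\rho$ does not prevent it from lying in a category $(\mathbf{b})$ coset; ruling that out requires exactly the kind of element-by-element (in practice computational) verification that the paper buries in Theorem~\ref{4.2}'s claim that the placement of $m_1$ and of the difference symbols ``covers each coset just once,'' supplemented by the $\gamma'_\rho$-distribution of Theorem~\ref{4.4}. The fact that $\Psi_\rho(V(H_{\rho-1}))$ preserves ${\bf P}_2^{\rho-2}$ setwise and fixes $m_1$ does give genuine coset invariants (such as the image of $m_1$ under a right-coset representative), but those are far too coarse to separate all $N_\rho(\mathbf{a})+\cdots+N_\rho(\mathbf{e})$ cosets, and you have not exhibited invariants that do.

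Second, the covering step is circular as sourced: the lower-right entries of Table~I giving $|V(H_\rho)|$ are themselves computed as $\sum_\gamma\sum_x row_{\gamma}(x)N_\rho(x)$, i.e.\ under the assumption that the categories partition the quotient and that each coset has $|V(H_{\rho-1})|$ elements. You cannot then quote those totals to prove that the categories cover every coset. The gap is repairable---$V(H_\rho)$ is generated by the $(Q,a)$-permutations, so its order (it is $|GL(\rho,2)|=V_{\rho+2}$) can be computed independently of the coset decomposition, after which your counting argument does force surjectivity given disjointness---but that independent determination must be stated. For comparison, the paper's own proof simply invokes Theorem~\ref{4.2} together with the distribution of the super-type $\gamma'_\rho$ from Theorem~\ref{4.4}, and offers an alternative verification using the representatives of the type of $(J_{\rho-1})^2$ with the uniform count $N''_\rho(x)=2^{\rho-2}V_\rho$; in both versions the substance is a computation for $\rho\le 5$ rather than a structural argument, which is why the restriction $\rho\le 5$ and Question~\ref{4.3} appear.
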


\begin{proof} For $\rho>2$, the corollary follows from Proposition~\ref{4.2} with
distribution as in Proposition~\ref{4.4} for the vertices of type
$\tau'_\rho$, or super-type $\gamma'_\rho$. It is easy to see
that the statement also holds for $\rho=2$. \end{proof}

 Corollary~\ref{4.5} can be proved alternatively by means of the ${\mathcal
A}$-per\-mu\-ta\-tion $(J_{\rho-1})^2$ (obtained via the doubling of
$J_{\rho-1}$ in $V(H_\rho)$, Subsection~\ref{ss1}) and the coset
representatives of $V(H_\rho)$ selected with the type of
$(J_{\rho-1})^2$, yielding alternative super-types
$\gamma''_2=(3)^2$, $\gamma''_3=(7)^2$, $\gamma''_4=(15)^2$,
$\gamma''_5=((3)(7)(21))^2$, $\ldots$ In this case, by defining
$N''_\rho(x)$ as $N'_\rho(x)$ was in Proposition~\ref{4.4}, but with
$\gamma''_\rho$ instead of $\gamma'_\rho$, we get uniformly that
$N''_\rho(x)=2^{\rho-2}V_\rho$, where
$x\in\{\mathbf{(a)},\ldots,\mathbf{(e)}\}$. This covers all the
classes of $V(H_\rho)$ mod $V(H_{\rho-1})$ and reconfirms the statement.

\begin{proposition}\label{4.6} With the notation of {\rm Proposition~\ref{4.4}}, $|V(H_\rho)|=V_{\rho+2}$ at least for
$\rho\le 5$. Moreover, the following properties of the graphs $G_r^\sigma$ hold for
$\sigma\ge 1$, $\rho\ge 2$ and at least for $\rho\le 5$:

\noindent{\bf(A)}
$|V(G_r^\sigma)|$ is as claimed in {\rm Conjecture~\ref{2.1}};

\noindent{\bf(B)} $G_r^\sigma$ is $sm_0(t-1)$-regular;

\noindent{\bf(C)} The diameter of $G_r^\sigma$ is $\le 2r-2$.

\noindent Thus, order, degree and diameter of $G_r^\sigma$ are
respectively: $O(2^{(r-1)^2})$,
$O(2^{r-1})$ and $O(r-1)$.
\end{proposition}

\begin{proof} Item {\bf(C)} is an immediate corollary of Theorem~\ref{t5.1}. Item {\bf(B)} can be deduced from Definition~\ref{d2}.
Recall that $N_\rho(x)$
is the number of cosets (Proposition~\ref{4.2}) in each category
$x\in\{\mathbf{(a)},\ldots,\mathbf{(e)}\}$. Counting cosets obtained
via doubling (Subsection~\ref{ss1}) in each category shows that at least
for $\rho\le 5$:

$\bullet$\hspace*{3mm} $N_\rho\mathbf{(a)}=1$;

$\bullet$\hspace*{3mm} $N_\rho\mathbf{(b)}=2(2^{\rho-1}-1)$;

$\bullet$\hspace*{3mm} $N_\rho\mathbf{(c)}=2^{\rho-2}(2^{\rho-1}-1)$;

$\bullet$\hspace*{3mm} $N_\rho\mathbf{(d)}=2N_\rho(c)$;

$\bullet$\hspace*{3mm} $N_\rho\mathbf{(e)}=(2^{\rho-2}-1)(2^{\rho-1}-1)$.

 Each coset in these categories contains exactly
$|V(H_{\rho-1})|$ ${\mathcal A}$-permutations. Thus,
$|V(H_\rho)|=V_{\rho+2}$. Since $G_r^\sigma$ is the disjoint union
of ${r\choose\sigma}_2$ copies of $T_{st,t}$\,, item {\bf(A)}
follows. Finally, we get that
$|V(G_r^\sigma)|=O(2^{(r-1)^2})$, since $(2^r-1)\le
{r\choose\sigma}_2$ and
$|V(G_r^1)|=(2^r-1)\Pi_{i=2}^{r-1}(2^{i-1}(2^i-1))=$
$O(2^{r-1}4^{1+2+3+\cdots+(r-2)})=$ $O(2^{r-1+(r-2)(r-1)}),$
which is $O(2^{(r-1)^2}).$ \end{proof}

\begin{question}\label{4.7} Do the statements of {\rm Corollary~\ref{4.5}} and {\rm Proposition~\ref{4.6}} hold for $\rho>5$?\end{question}

\subsection{Menger-Graph Parameters}\label{s6}

By Subsection~\ref{s4}, an automorphism $\phi\in{\mathcal A}(G_r^\sigma)$ can be presented as $\omega=\phi^{\,\omega}.\psi^{\,\omega}$, where $\phi^{\,\omega}$ is a permutation of affine $\sigma$-subspaces of $\PP_2^{r-1}$ and $\psi^{\,\omega}$ is a permutation of the non-initial entries of
ordered pencils that are vertices of $G_r^\sigma$, e.g.,~a permutation of the indices $k$ in entries $A_k$ of such ordered pencils, where $0<k\le m_0=2^\rho-1$. The subgroup of ${\mathcal N}_r^\sigma={\mathcal A}(N_{G_r^\sigma}(v_r^\sigma))$ that fixes $u_r^\sigma$ is formed by the $2^{\rho-1}$ automorphisms $\omega$ in item {\bf(A)} of Subsection~\ref{s4} with $\pi\in \PP_2^{\rho-1}$ as the third lexicographically smallest such point, namely point $3\times 2^{\sigma-1}$.

\begin{theorem}\label{t6.1} Assume $r\le 8$ and $\rho\le 5$. If $t<2s$, then $G_r^\sigma$ is a connected $s(t-1)m_0$-regular
$\{K_{2s},T_{st,t}\}_{\ell_0,\ell_1}^{m_0,m_1}$-H graph, but if $r>3$ then $G_r^\sigma$ is not $\{K_{2s},T_{st,t}\}_{\ell_0,\ell_1}^{m_0,m_1}$-UH. Furthermore, $G_r^\sigma$ is the Menger graph of a con\-fi\-gu\-ra\-tion
$(|V(G_r^\sigma)|_{m_0},(\ell_0)_{2s})$ whose points and lines are the vertices and copies of $K_{2s}$ in $G_r^\sigma$, respectively. If $\sigma = 1$, then {\bf(a)} $|V(G_r^\sigma)|_{m_0}=(\ell_0)_{2s}$; {\bf(b)} such con\-fi\-gu\-ra\-tion is self-dual; and {\bf(c)} as a Menger graph, $G_r^\sigma$ coincides with the corresponding dual Menger graph.
In addition, $G_r^\sigma={\mathcal G}_r^\sigma$ if and only if $r-\sigma=2$. If $t\ge 2s$, then {\bf(i)} $G_r^\sigma$ is $\{K_{2s}\not\subset T_{st,t},T_{st,t}\}$-H and the remaining properties above hold by taking only $K_{2s}\not\subset T_{st,t}$; {\bf(ii)} if $r-\sigma=2$ then $G_r^\sigma$ is $\{K_4\not\subset T_{4t,t}\}$-UH.
\end{theorem}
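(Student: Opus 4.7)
The plan is to reduce each clause of the statement to earlier results, with the non-UH assertion being the only claim requiring a fresh argument.

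First, connectedness is tautological ($G_r^\sigma$ is defined as a component of $\mathcal{G}_r^\sigma$), regularity $s(t-1)m_1$ and the $K_2$-H property are immediate from Theorem~\ref{t3.5} applied to this component, and the refined parameters $(m_0, m_1, \ell_0, \ell_1)$ in the label $\{K_{2s},T_{ts,t}\}_{\ell_0,\ell_1}^{m_0,m_1}$-H are those of Question~\ref{2.1}, confirmed by Theorem~\ref{4.6} in the stated range $r\le 8$, $\rho\le 5$. Axiom (iv) of Section~\ref{s1}, already verified within the proof of Theorem~\ref{t3.5}, says that each edge of $G_r^\sigma$ lies in a unique copy of $K_{2s}$ in $J_0$, so two vertices are adjacent iff they share such a copy. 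Taking vertices as points and these copies as lines therefore realizes $G_r^\sigma$ as the Menger graph of the configuration $(|V(G_r^\sigma)|_{m_0},(\ell_0)_{2s})$.

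Specialising Question~\ref{2.1} to $\sigma = 1$ gives $m_0 = 2s$ and $\ell_0 = |V(G_r^\sigma)|$, so the configuration is of the symmetric type $(n_{2s})$. I would exhibit self-duality via the bijection that sends each vertex $v = (A_0, A_1, \ldots, A_{m_1})$ to the copy of $K_{2s}$ in $J_0$ canonically attached to $v$ through Remark~\ref{I} (reading $(A_1, \ldots, A_{m_1})$ as an $(r-1, 0)$-ordered pencil) and check incidence-preservation on the generators of $\mathcal{A}(G_r^\sigma)$ listed in items (A)-(C) of Section~\ref{s4}; the Menger graph then coincides with its dual.

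The equivalence $G_r^\sigma = \mathcal{G}_r^\sigma \iff r-\sigma = 2$ is a numerical comparison: Theorem~\ref{t3.5} gives $|V(\mathcal{G}_r^\sigma)| = \binom{r}{\sigma}_2(2^\rho-1)!$, while Theorem~\ref{4.6} combined with Question~\ref{2.1} gives $|V(G_r^\sigma)| = \binom{r}{\sigma}_2\prod_{i=1}^\rho 2^{i-1}(2^i-1)$; the product equals $(2^\rho-1)!$ iff $\rho = 2$, both values then being $6\binom{r}{\sigma}_2$. When $\rho = 2$, axiom 3 in the edge-definition of $\mathcal{G}_r^\sigma$ is automatic from axioms 1-2, and Theorem~\ref{t3.5} delivers the $K_4$-UH conclusion.

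The main obstacle is negating $\{K_{2s},T_{ts,t}\}$-UH for $r > 3$. The strategy is a stabilizer count: were $G_r^\sigma$ UH, the setwise stabilizer in $\mathcal{A}(G_r^\sigma)$ of any induced copy $Y$ of $T_{ts,t}$ (equivalently of $K_{2s}$) would have to induce on $Y$ the whole of $\mathrm{Aut}(T_{ts,t}) = S_t\wr S_s$ (resp.\ $S_{2s}$). Using Lemma~\ref{later} to write $|\mathcal{A}(G_r^\sigma)| = |\mathcal{N}_r^\sigma|\cdot|\mathcal{H}_\rho|$ with $|\mathcal{N}_r^\sigma| = 2^{A}BC$ from Theorem~\ref{3.1} and $|\mathcal{H}_\rho| = V_{\rho+2}$ from Theorem~\ref{4.6}, and dividing by the transitive orbit size $\ell_1$, one obtains the stabilizer order and compares it directly with $t!(s!)^t$. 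Equality is forced only at $(r,\sigma)=(3,1)$; strict inequality for $r > 3$ produces an isomorphism between two induced copies of $T_{ts,t}$ that fails to extend. I expect this step to be the most technical, since $A$, $B$, $C$, $V_{\rho+2}$ and $t!(s!)^t$ grow at quite different rates, so the comparison breaks into several cases as $(r,\sigma)$ vary within the stated range.
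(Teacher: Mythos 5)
Most of your reductions track the paper's own proof: connectedness, regularity and the H-property are pulled from Theorem~\ref{t3.5}, the parameters from Theorem~\ref{4.6}, the Menger-graph claim from axiom {\bf(iv)}, and the configuration statements from Section~\ref{s2}. Your order comparison for $G_r^\sigma={\mathcal G}_r^\sigma\iff\rho=2$ (namely $\prod_{i=1}^\rho 2^{i-1}(2^i-1)=(2^\rho-1)!$ only for $\rho=2$) is actually cleaner than the paper, which only asserts the ``if'' direction; that is a small gain. Two caveats on the side clauses: the paper's proof of Theorem~\ref{t6.1} does not merely cite Theorem~\ref{t3.5} for the H-property but explicitly builds the arc-carrying automorphisms $\Phi_2\Phi_1^{-1}$ in the setting of Lemma~\ref{later}, and your proposed self-duality bijection for $\sigma=1$ does not typecheck as written (the non-initial entries $A_i$ of a vertex are $2$-element cosets, whereas the non-initial entries $U_i$ of an $(r-1,0)$-ordered pencil indexing a copy of $K_{2s}$ in Remark~\ref{I} are single points), though the paper itself is terse here.

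The genuine divergence, and the genuine gap, is the non-UH claim for $r>3$. The paper's argument is local: from Remarks~\ref{I}--\ref{II} it exhibits an automorphism of the copy of $T_{ts,t}$ through the edge $v_r^\sigma u_r^\sigma$ that fixes both endpoints yet does not extend to $G_r^\sigma$ (and similarly for $K_{2s}$ when $\sigma\ne r-2$); no global group order is needed. Your stabilizer count instead requires the exact value of $|{\mathcal A}(G_r^\sigma)|$, which you take to be $|{\mathcal N}_r^\sigma|\cdot|{\mathcal H}_\rho|=2^ABC\cdot V_{\rho+2}$. This runs into a concrete obstruction already at $(r,\sigma)=(4,1)$: there $2^ABC=2^6\cdot 21=1344$ and $V_5=168$, so the product is $2^9\cdot 3^2\cdot 7^2$, which is divisible neither by $|V(G_4^1)|=2520=2^3\cdot3^2\cdot5\cdot7$ nor by $\ell_1=1470$. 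Since $G_4^1$ is vertex-transitive and acts transitively on the $\ell_1$ copies of $T_{12,3}$, orbit--stabilizer forces both numbers to divide $|{\mathcal A}(G_4^1)|$; hence the formula you plan to divide by $\ell_1$ cannot be the full automorphism group order, and the quotient $|{\mathcal A}|/\ell_1$ you would compare with $t!(s!)^t$ is not even an integer. You would first have to repair or reinterpret Lemma~\ref{later} and Theorem~\ref{3.1} before the counting route can be carried out; the paper's explicit non-extensible local automorphism sidesteps this entirely and is the argument you should supply.
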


\begin{proof} For two induced copies $Z_0,Z_1$ of $K_{2s}$ (respectively, $T_{st,t}$) in $G_r^\sigma$ and arcs $(v_0,w_0),(v_1,w_1)$ in $Z_0,Z_1$, respectively, there exist automorphisms $\Phi_0,\Phi_1$ of $G_r^\sigma$ with $\Phi_i(v_r^\sigma)=v_i,\Phi_i(u_r^\sigma)=w_i$ that send $N_{G_r^\sigma}(v_i)\cap Z_i$ onto $N_{G_r^\sigma}(v_r^\sigma)\cap Z'$\,, for $i\in\{0,1\}$, where $Z'$ is the lexicographically smallest copy of $K_{2s}$ (respectively, $T_{st,t}$) in $G_r^\sigma$, namely $Z'=[U]_r^\sigma$ with $U$ as the third lexicographically smallest
$(r-1,\sigma-1)$-ordered pencil of $\PP_2^{r-1}$, sharing with $[(\PP_2^{\sigma})_1]_r^\sigma$ just $u_r^\sigma$ (respectively, $Z'=[(\PP_2^{\sigma})_1]_r^\sigma$). As a result, and because of Proposition~\ref{3.1} and Proposition~\ref{4.6} resulting from the mentioned computations leading to the connectedness of $G_r^\sigma$ at least for $r\le 8$ and $\rho\le 5$, the composition $\Phi_2\Phi_1^{-1}$ in ${\mathcal A}(G_r^\sigma)$ takes $Z_0$ onto $Z_1$ and $(v_0,w_0)$ onto $(v_1,w_1)$. Taking this into account, and that $G_r^\sigma$ satisfies conditions {\bf(i)}-{\bf(iv)} in Subsection~\ref{s11}, implies that $G_r^\sigma$ is a $\{K_{2s},T_{st,t}\}_{\ell_0,\ell_1}^{m_0,m_1}$-H graph. Recall from \cite{Dej2} that $G_3^1$ is $\{K_4,T_{6,3}\}$-UH. It holds that ${\mathcal G}_r^\sigma=G_r^\sigma$ whenever $\rho=2$; this $G_r^\sigma$ is $K_4$-UH by an argument similar to that of \cite{Dej2}. From Remarks~\ref{I}-\ref{II}, for $(r,\sigma)\neq(3,1)$ there are automorphisms of the copy of $T_{st,t}$ in $G_r^\sigma$ containing the edge $v_r^\sigma u_r^\sigma$ that fixes both $v_r^\sigma$ and $u_r^\sigma$ but this is non-extensible to an automorphism of $G_r^\sigma$. A similar conclusion holds for $K_{2s}$\,, provided $\sigma\ne r-2$, for which $K_{2s}=K_4$. The assertions involving configurations in the statement arise as a result of the translation of the generated construction in terms of con\-fi\-gu\-ra\-tions of points and lines and their Menger graphs (Section~\ref{s2}).\end{proof}

\begin{question}\label{5.2}
Does the statement of {\rm Theorem~\ref{t6.1}} hold for all pairs $(r,\rho)$
with either $r>8$ or $\rho>5$?
\end{question}


\begin{thebibliography}{99}
\bibitem{BR} A. Beutelspacher and U. Rosenbaum, Projective geometry: from foundations to applications, Cambridge University Press, 1998.
\bibitem{Cox} H. S. M. Coxeter, {\rm Self-dual con\-fi\-gu\-ra\-tions and regular graphs},
{\it Bull. Amer. Math. Soc.}, 56 (1950) 413--455.
\bibitem{Dej2}
I. J. Dejter, On a {\rm $\{K_4,K_{2,2,2}\}$-ultraho\-mo\-ge\-neous graph}, {\it Australas. J. Combin.}, 44 (2009), 63--75.
\bibitem{Gard} A. Gardiner, {\rm Homogeneous graphs}, {\it J. Combinatorial Theory (B)}, 20
    (1976), 94--102.
\bibitem{GK}
Ja. Ju. Gol'fand and M. H. Klin, {\rm On $k$-ho\-mo\-ge\-neous graphs},
{\it Algorithmic Stud. Combin.}, 186 (1978), 76--85 (in Russian).
\bibitem{Is}
D. C. Isaksen, C. Jankowski and S. Proctor, {\rm On
$K_*$-ultraho\-mo\-ge\-neous graphs}, {\it Ars Combin.} 82 (2007),
83--96.
\bibitem{Por}  M. Klin, R. P\"oschel, and K. Rosenbaum, {\it Angewandte Algebra f\"ur Mathematiker und Informatiker}, (in German)
    Friedr. Vieweg and Sohn, Braunschweig, 1988.

\bibitem{Sven} S. Reichard, {\rm A criterion for the $t$-vertex condition of graphs}, {\it J. Combin. Theory Ser. A}  90 (2000), 304--314.
\bibitem{Ronse}
C. Ronse, {\rm On ho\-mo\-ge\-neous graphs}, {\it J. London Math. Soc.}
(2) 17 (3) (1978), 375--379.

\bibitem{Sh} J. Sheehan, {\rm Smoothly embeddable subgraphs}, {\it J. London Math. Soc.} (2) 9 (1974/75), 212--218.  
\end{thebibliography}
\end{document}